\newcommand{\pushright}[1]{\ifmeasuring@#1\else\omit\hfill$\displaystyle#1$\fi\ignorespaces}
\newcommand{\pushleft}[1]{\ifmeasuring@#1\else\omit$\displaystyle#1$\hfill\fi\ignorespaces}
\newcommand\Z{\mathbb{Z}}
\newcommand\N{\mathbb{N}}
\newcommand\F{\mathbb{F}}
\newcommand\fa{\mathfrak{a}}
\newcommand\g{\mathfrak{g}}
\newcommand\fh{\mathfrak{h}}
\newcommand\fsl{\mathfrak{sl}}
\newcommand\ft{\mathfrak{t}}
\newcommand\cB{\mathcal{B}}
\newcommand\sm{\mathsf{m}}
\newcommand\ts{\textstyle}
\DeclareMathOperator{\ad}{ad}
\DeclareMathOperator{\ev}{ev}
\DeclareMathOperator{\Hom}{Hom}
\DeclareMathOperator{\id}{id}
\DeclareMathOperator{\Inf}{Inf}
\DeclareMathOperator{\maxSpec}{maxSpec}
\DeclareMathOperator{\Span}{Span}
\DeclareMathOperator{\Spec}{Spec}
\newtheorem{theo}{Theorem}[section]
\newtheorem{prop}[theo]{Proposition}
\newtheorem{lem}[theo]{Lemma}
\newtheorem{cor}[theo]{Corollary}
\theoremstyle{definition}
\newtheorem{rem}[theo]{Remark}
\newtheorem{eg}[theo]{Example}
\numberwithin{equation}{section}
  \newcommand{\acomments}[1]{
    \ \\
    {\color{red}
      \textbf{AS:} #1
    }
    \ \\
    }
  \newcommand{\tcomments}[1]{
    \ \\
    {\color{red}
      \textbf{TM:} #1
    }
    \ \\
    }
  \newcommand{\acomments}[1]{}
  \newcommand{\tcomments}[1]{}
  \newcommand{\details}[1]{
      \ \\
      {\color{OliveGreen}
        \textbf{Details:} #1
      }
      \ \\
  }
  \newcommand{\details}[1]{}
  \newcommand{\prelim}{\textsc{Preliminary version} \bigskip}
  \newcommand{\prelim}{}
\begin{document}
%

\title{Invariant polynomials on truncated multicurrent algebras}

\author{Tiago Macedo}
\address[T.M.]{Department of Mathematics and Statistics, University of Ottawa and
Department of Science and Technology, Federal University of S\~ao Paulo, S\~ao Jos\'e dos Campos}
\urladdr{\url{http://www.ict.unifesp.br/tmacedo/}}
\email{tmacedo@unifesp.br}
\thanks{The first author was supported by CNPq grant 232462/2014-3.}

\author{Alistair Savage}
\address[A.S.]{
  Department of Mathematics and Statistics \\
  University of Ottawa
}
\urladdr{\href{http://alistairsavage.ca}{alistairsavage.ca}, \textrm{\textit{ORCiD}:} \href{https://orcid.org/0000-0002-2859-0239}{orcid.org/0000-0002-2859-0239}}
\email{alistair.savage@uottawa.ca}
\thanks{The second author was supported by Discovery Grant RGPIN-2017-03854 from the Natural Sciences and Engineering Research Council of Canada.}

\begin{abstract}
  We construct invariant polynomials on truncated multicurrent algebras, which are Lie algebras of the form $\mathfrak{g} \otimes_\mathbb{F} \mathbb{F}[t_1,\dotsc,t_\ell]/I$, where $\mathfrak{g}$ is a finite-dimensional Lie algebra over a field $\mathbb{F}$ of characteristic zero, and $I$ is a finite-codimensional ideal of $\mathbb{F}[t_1,\dotsc,t_\ell]$ generated by monomials.  In particular, when $\mathfrak{g}$ is semisimple and $\mathbb{F}$ is algebraically closed, we construct a set of algebraically independent generators for the algebra of invariant polynomials.  In addition, we describe a transversal slice to the space of regular orbits in $\mathfrak{g} \otimes_\mathbb{F} \mathbb{F}[t_1,\dotsc,t_\ell]/I$.  As an application of our main result, we show that the center of the universal enveloping algebra of $\mathfrak{g} \otimes_\mathbb{F} \mathbb{F}[t_1,\dotsc,t_\ell]/I$ acts trivially on all irreducible finite-dimensional representations provided $I$ has codimension at least two.
\end{abstract}

\subjclass[2010]{Primary 17B05, 17B08; Secondary 17B35, 17B70}
\keywords{Truncated current algebra, Takiff algebra, equivariant map algebra, invariant polynomial, universal enveloping algebra, transversal slice, regular orbit}

\prelim

\maketitle
\thispagestyle{empty}

\tableofcontents

%
\section{Introduction}
%

\subsection{Motivation} \label{subsec:motivation}

Suppose $A$ is a finitely-generated commutative associative unital algebra over a field $\F$ of characteristic zero.  All tensor products will be over $\F$.  Furthermore, suppose that $\Gamma$ is a finite group acting on $A$ and on a finite-dimensional Lie algebra $\g$ by automorphisms.  The corresponding \emph{equivariant map algebra} $(\g \otimes A)^\Gamma$ can be viewed as the Lie algebra of $\Gamma$-equivariant algebraic maps from $\Spec A$ to $\g$.  Equivariant map algebras are a large class of Lie algebras generalizing loop algebras and current algebras, which are vital to the theory of affine Lie algebras, and are an extremely active area of research.  We refer the reader to the survey \cite{NS13} for an overview of the field.

In the case that $\Gamma$ is abelian and acts freely on $\maxSpec A$, $\F$ is algebraically closed, and $\g$ is semisimple, one can use the twisting and untwisting functors defined in \cite{FKKS12} to reduce the study of finite-dimensional representations of $(\g \otimes A)^\Gamma$ to the study of representations of the corresponding \emph{untwisted} map algebra $\g \otimes A$, where $\Gamma$ is trivial (see \cite[Th.~2.10]{FKKS12}).

Now assume that $\Gamma$ is trivial and $\g$ is semisimple.  Since $A$ is finitely generated, we may assume that $A = \F[t_1,\dotsc,t_\ell]/I$ for some ideal $I$ of $\F[t_1,\dotsc,t_\ell]$.  In particular, maximal ideals of $A$ correspond to maximal ideals of $\F[t_1,\dotsc,t_\ell]$ containing $I$.  It is known that any finite-dimensional $\g \otimes A$-module is a tensor product of modules supported at single points, that is, modules annihilated by $\g \otimes (\sm^N + I)$ for some maximal ideal $\sm$ of $A$ containing $I$, and some $N \in \N$.  (This follows, for example, from \cite[Prop.~2.4]{FKKS12}.)  Suppose $V$ is a $\g \otimes A$-module annihilated by $\g \otimes (\sm^N + I)$. Translating if necessary, we may assume that $\sm = (t_1,\dotsc,t_\ell)$ is the maximal ideal corresponding to the origin.  Since $\sm^N + I$ annihilates $V$, so does $\sm^N$.  Hence, $V$ is naturally a module for the quotient $\g \otimes \F[t_1,\dotsc,t_\ell]/\sm^N$.  It follows that we can reduce the study of finite-dimensional $\g \otimes A$-modules to the case $A = \F[t_1,\dotsc,t_\ell]/I$, where $I$ is an ideal of $\F[t_1,\dotsc,t_\ell]$ of finite codimension, generated by monomials.  We call these Lie algebras \emph{truncated multicurrent algebras}.

In the case $\ell=1$, the Lie algebras $\g \otimes \F[t]/(t^{n+1})$, called \emph{truncated current algebras}, or \emph{generalized Takiff algebras}, have appeared in many places in the literature.  Most relevant to the current paper is the work of Takiff, who considered invariant polynomials in the case $n=1$ in \cite{Tak71}.  Ra\"is and Tauvel considered the case of arbitrary $n$ in \cite{RT92}, as did Geoffriau in \cite{Geo94a,Geo95}.  More recently, graded modules for Takiff algebras were investigated in \cite{CG09}, highest-weight theory for truncated current algebras were considered by Wilson in \cite{Wil11}, and connections to the geometric Langlands program were studied by Kamgarpour in \cite{Kam16}.

Motivated by the above discussion, in the current paper we study the structure of truncated multicurrent algebras.  A particularly useful tool in the representation theory of Lie algebras has been the action of the center of the universal enveloping algebra.  On the other hand, the Duflo isomorphism is an algebra isomorphism between the center of the universal enveloping algebra of a finite-dimensional Lie algebra and the invariants in its symmetric algebra (see \cite{Duf77}).  The goal of the current paper is to describe this space of invariants for truncated multicurrent algebras.

\subsection{Main results}

Suppose $\F$ is a field of characteristic zero, $\g$ is a finite-dimensional Lie algebra, and $\ell$ is a positive integer.  Define a partial order on the set $\Omega = \N^\ell$ by
\[
  (n_1,\dotsc,n_\ell) \le (m_1,\dotsc,m_\ell) \iff n_i \le m_i \quad \text{for all } i \in \{1,\dotsc,\ell\}.
\]
Suppose $\Omega_0$ is a subset of $\Omega$ that is invariant under the action of $\Omega$ on itself by componentwise addition, and such that $\Omega_1 = \Omega \setminus \Omega_0$ is finite.  Then the span $\F \langle t^\omega \mid \omega \in \Omega_0 \rangle$ is an ideal of $\F[t_1,\dotsc,t_\ell]$ and we define
\[
  A = \F[t_1,\dotsc,t_\ell] / \F \langle t^\omega \mid \omega \in \Omega_0 \rangle.
\]

In Section~\ref{subsec:poly-defs}, we associate to any polynomial $p \in S(\g)$ a family $p_\omega$, $\omega \in \Omega$, of elements of $S(\g \otimes A)$.  Suppose that $p \in S(\g)^\g$, that is, $p$ is invariant under the action of $\g$ on $S(\g)$ induced by the adjoint action.  We then show, in Proposition~\ref{prop:pgamma-invariant}, that $p_\omega \in S(\g \otimes A)^{\g \otimes A}$ for certain values of $\omega$.  In particular, if $\Omega_1$ has a greatest element $\mu$, which we will assume for the remainder of this introduction, then, for all $k \in \N$,
\[
  p \in S^k(\g)^\g \implies p_\omega \in S^k(\g \otimes A)^{\g \otimes A} \text{ for } \omega \in k \mu - \Omega_1.
\]
(Note that $\Omega_1$ has a greatest element if and only if $A$ is a symmetric algebra.  If the greatest element is $\mu$, the corresponding trace map is projection onto $A_\mu$.)  In other words, given invariant polynomials in $S(\g)$, one can construct invariant polynomials in $S(\g \otimes A)$.  Furthermore, in Proposition~\ref{prop:poly-alg-ind}, we show that a collection of polynomials
\[
  p^{(j)} \in S^{k_j}(\g),\quad 1 \le j \le r,
\]
is algebraically independent if and only if the associated collection
\[
  p^{(j)}_\omega \in S^{k_j}(\g \otimes A),\quad 1 \le j \le r,\ \omega \in k_j \mu - \Omega_1,
\]
is algebraically independent.

Now specialize to the case where $\F$ is algebraically closed and $\g$ is semisimple.  In this case, we may identify $\g$ with $\g^*$ via the Killing form, and it is known that $S(\g^*)^\g$ is isomorphic to a finitely-generated polynomial algebra.  Thus, we may choose algebraically independent generators $p^{(j)} \in S^{k_j}(\g^*)^\g$, $j \in \{1,\dotsc,r\}$, of $S(\g^*)^\g$.  Our main result, Theorem~\ref{theo:inv-polys}, states that the corresponding polynomials $p^{(j)}_\omega$, $\omega \in \Omega_1$, $j \in \{1,\dotsc,r\}$, form a system of algebraically independent generators of $S\big( (\g \otimes A)^* \big)^{\g \otimes A}$.  In addition, we describe a transversal slice $\ft$ to the regular orbits in $\g \otimes A$ under the adjoint action, such that restriction to $\ft$ induces an isomorphism of algebras from $S\big( (\g \otimes A)^* \big)^{\g \otimes A}$ to $S(\ft^*) = \F[\ft]$.  These results generalize \cite[Th.~4.5]{RT92}, which considers the truncated current (i.e.\ $\ell=1$) case.  In fact, we also generalize, in Proposition~\ref{prop:chi-regular}, two other results from that paper, by computing the index of the Lie algebra $\g \otimes A$ and characterizing the regular linear forms on $\g \otimes A$.  (Note that Proposition~\ref{prop:chi-regular} does not require the assumption that $\F$ is algebraically closed nor that $\g$ is semisimple.)

Finally, as an application of our main result, we show, in Theorem~\ref{theo:center-acts-trivially}, that the center of the universal enveloping algebra of $\g \otimes A$ acts trivially (i.e.\ via the augmentation map) on all irreducible finite-dimensional representations (but not necessarily on all finite-dimensional representations), provided $\mu > 0$.  It follows that action of this center does \emph{not} separate blocks in the category of finite-dimensional representations.

\subsection{Future directions} \label{subsec:future-directions}

The results of the current paper suggest many natural possible directions of future research.  We list some of these here.

\begin{asparaenum}
  \item Kostant proved that, if $\g$ is a reductive Lie algebra, then $S(\g^*)$ is free over $S(\g^*)^\g$ (see \cite[Th.~11]{Kos63}), and hence that the universal enveloping algebra $U(\g)$ is free over its center $Z(\g)$ (see \cite[Th.~21]{Kos63}).  In the case that $\g$ is semisimple and $A = \F[t]/(t^2)$, analogous results were proved in \cite[Th.~3.3]{Geo94a}.  This was generalized to the case $A = \F[t]/(t^{n+1})$, $n \in \N$, in \cite[Th.~A.4]{Mus01}.  It is thus natural to ask if one can prove these results more generally in the case where $\g \otimes A$ is a truncated multicurrent algebra.

  \item If $\g$ is semisimple, one has the Harish-Chandra isomorphism between $Z(\g)$ and $S(\fh)^W$, the algebra of polynomials in a Cartan subalgebra $\fh$ of $\g$, invariant under the action of the Weyl group $W$ (see, for example, \cite[\S23.3]{Hum78}).  An analogue of this result in the case $A=\F[t]/(t^{n+1})$, $n \in \N$, was proved in \cite[Th.~4.7]{Geo95}.  It would be interesting to construct an analogue of the Harish-Chandra homomorphism in the more general setting of truncated multicurrent algebras.

  \item In Section~\ref{subsec:motivation}, we recalled how, when the group $\Gamma$ acts freely on $\maxSpec A$, one can reduce the study of finite-dimensional representations of $(\g \otimes A)^\Gamma$ to the untwisted setting in which $\Gamma$ is the trivial group.  However, this procedure fails when $\Gamma$ does not act freely.  (We refer the reader to \cite{NSS12} for a discussion of this more general setup.)  It is an open problem to examine the structure of the invariant polynomials $S \big( (\g \otimes A)^\Gamma \big)^{(\g \otimes A)^\Gamma}$ in general.

  \item The theory of equivariant map algebras was extended to the super case in \cite{Sav14,CMS16,Bag15,CM16}.  It would be interesting to also extend the results of the current paper to that setting.
\end{asparaenum}

\iftoggle{detailsnote}{
\subsection*{Note on the arXiv version} For the interested reader, the tex file of the \href{https://arxiv.org/abs/1607.06411}{arXiv version} of this paper includes hidden details of some straightforward computations and arguments that are omitted in the pdf file.  These details can be displayed by switching the \texttt{details} toggle to true in the tex file and recompiling.
}{}

\subsection*{Acknowledgment}

The authors would like to thank V.~Chari for helpful discussions.

%
\section{Preliminaries}
%

We fix a field $\F$ of characteristic zero, and all algebras and tensor products are over $\F$.  We assume that all associative algebras possess multiplicative units.  If $U$ is a subspace of a vector space $V$, then, for $v_1,v_2 \in V$, we write $v_1 \equiv v_2 \mod U$ to indicate that $v_1 - v_2 \in U$ (in other words, the images of $v_1$ and $v_2$ in $V/U$ are equal.)  We let $\N = \{0,1,2,\dotsc\}$ denote the set of nonnegative integers and $\N_+ = \{1,2,3,\dotsc\}$ denote the set of positive integers.

\subsection{Polynomial functions and the adjoint action} \label{subsec:poly-funcs-adjoint}

For a vector space $V$, we let $S(V) = \bigoplus_{k \in \N} S^k(V)$ denote the symmetric algebra on $V$ with its usual grading by degree.  Let $\fa$ be an arbitrary Lie algebra.  Recall that $\fa$ acts on itself via the adjoint action:
\begin{equation} \label{eq:adjoint-action-g}
  x \cdot y = [x,y],\quad \text{for all } x,y \in \fa.
\end{equation}
We then have an action of $\fa$ on $S(\fa)$ defined inductively by \eqref{eq:adjoint-action-g} and
\[
  x \cdot (p_1p_2) = (x \cdot p_1) p_2 + p_1 (x \cdot p_2),\quad \text{for all } x \in \fa,\ p_1,p_2 \in S(\fa).
\]
Note that this implies that $\fa$ acts as zero on $S^0(\fa) = \F$.  We let
\[
  S(\fa)^\fa = \{p \in S(\fa) \mid x \cdot p = 0 \text{ for all } x \in \fa\}
\]
denote the set of $\fa$-invariant elements of $S(\fa)$.

The corresponding dual action of $\fa$ on $\fa^* = \Hom_\F (\fa,\F)$ is defined by
\[
  (x \cdot f)(y)
  = -f(x \cdot y)
  = f([y,x]),\quad x,y \in \fa,\ f \in \fa^*.
\]
Since elements of $\fa^*$ are $\F$-linear maps $\fa \to \F$, they induce algebra homomorphisms $S(\fa) \to \F$.  (We use here the universal property of the symmetric algebra and the fact that $\F$ is commutative.)  Thus, we have a map
\[
  S(\fa) \times \fa^* \to \F,\quad (p,f) \mapsto p(f),\quad p \in S(\fa),\ f \in \fa^*.
\]
In this way, we can view elements of $S(\fa)$ as polynomial functions on $\fa^*$.

More generally, fix a commutative associative algebra $B$.  (We shall be particularly interested in the cases $B = \F[\Gamma]$, $B = \F[s]$, and $B = \F[\Gamma] \otimes \F[s]$, where $\Gamma$ is an abelian group and $s$ is an indeterminate.)  Then we have an $\F$-linear map determined by
\[
  B \otimes \fa^* \otimes \fa \to B,\quad b \otimes f \otimes x \mapsto f(x) b,
  \quad \text{for all } b \in B,\ f \in \fa^*,\ x \in \fa.
\]
As above, any element of $B \otimes \fa^*$ thus induces an algebra homomorphism $S(\fa) \to B$.  So
we have a map
\[
  S(\fa) \times \big( B \otimes \fa^*\big) \to B.
\]
We can therefore also view elements of $S(\fa)$ as polynomial functions on $B \otimes \fa^*$, taking values in $B$. More generally, we view elements of $S(\fa) \otimes B$ in the same way via
\[
  (p \otimes b)(f) = p(f)b,\quad
  \text{ for all } p \in S(\fa),\ f \in \fa^* \otimes B,\ b \in B.
\]

We define the structure of an $\fa \otimes B$-module on $\fa^* \otimes B$ by
\begin{equation} \label{eq:adjoint-dual-action-coeffs}
  (x \otimes b_1) \cdot (f \otimes b_2) = (x \cdot f) \otimes b_1 b_2,\quad
  x \in \fa,\ f \in \fa^*,\ b_1,b_2 \in B,
\end{equation}
and extending by linearity.  We also define the structure of an $\fa \otimes B$-module on $S(\fa) \otimes B$ by
\begin{equation} \label{eq:adjoint-poly-action-coeffs}
  (x \otimes b_1) \cdot (p \otimes b_1) = (x \cdot p) \otimes b_1 b_2,\quad
  x \in \fa,\ p \in S(\fa),\ b_1,b_2 \in B,
\end{equation}
and extending by linearity.  It follows immediately from this definition, that if $p \in S(\fa)^\fa$, then $(\fa \otimes B) \cdot (p \otimes b) = 0$ for all $b \in B$.

\begin{lem} \label{lem:linearization}
  Let $s$ be an indeterminate.  For all $P \in S(\fa) \otimes B$, $F \in \fa^* \otimes B$, and $X \in \fa \otimes B$, we have
  \begin{equation} \label{eq:linearization}
    P(F + s (X \cdot F)) \equiv P(F) - s (X \cdot P)(F) \mod s^2 B[s].
  \end{equation}
\end{lem}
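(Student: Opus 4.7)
The identity is essentially a first-order Taylor expansion in $s$, and the cleanest way to prove it is to show that both sides define algebra homomorphisms $S(\fa) \otimes B \to B[s]/(s^2 B[s])$ and then verify agreement on a generating set.

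The plan is to define two maps $\phi_L, \phi_R \colon S(\fa) \otimes B \to B[s]/(s^2 B[s])$ by
\[
  \phi_L(P) = P\bigl(F + s(X \cdot F)\bigr) \bmod s^2, \qquad
  \phi_R(P) = P(F) - s(X \cdot P)(F) \bmod s^2,
\]
and show they coincide. That $\phi_L$ is an algebra homomorphism is immediate: having fixed $F + s(X\cdot F) \in \fa^* \otimes B[s]$, the evaluation map $S(\fa) \otimes B \to B[s]$ is an algebra homomorphism by the discussion in Section~\ref{subsec:poly-funcs-adjoint}, and composing with the quotient $B[s] \to B[s]/(s^2 B[s])$ gives $\phi_L$. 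For $\phi_R$, the key input is that $X \cdot (\,\cdot\,)$ is a derivation on $S(\fa) \otimes B$ (immediate from the definition~\eqref{eq:adjoint-poly-action-coeffs} and the commutativity of $B$), so that $(X \cdot (P_1 P_2))(F) = (X \cdot P_1)(F)\, P_2(F) + P_1(F)\, (X \cdot P_2)(F)$. Expanding $\phi_R(P_1)\phi_R(P_2)$ and dropping the $s^2$ term then matches $\phi_R(P_1 P_2)$.

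It remains to verify $\phi_L = \phi_R$ on a set of $\F$-algebra generators of $S(\fa) \otimes B$, namely on $\fa \otimes 1$ and on $1 \otimes B$. For $P = 1 \otimes b$ both sides return $b$, since $X \cdot (1 \otimes b) = 0$. For $P = y \otimes 1$ with $y \in \fa$, reducing by $\F$-linearity to the case $F = f \otimes b_2$ and $X = x \otimes b_1$, one has $X \cdot F = (x \cdot f) \otimes b_1 b_2$, so
\[
  \phi_L(y \otimes 1) = f(y) b_2 + s\,(x \cdot f)(y)\, b_1 b_2 = f(y) b_2 + s\, f([y,x])\, b_1 b_2,
\]
while $(X \cdot (y \otimes 1))(F) = ([x,y] \otimes b_1)(F) = f([x,y])\, b_1 b_2$, giving the same expression for $\phi_R(y \otimes 1)$.

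Since $\phi_L$ and $\phi_R$ are algebra homomorphisms agreeing on generators, they are equal, which is the content of \eqref{eq:linearization}. The only real obstacle is bookkeeping: keeping straight the three parallel actions (of $\fa$ on $\fa^*$ and on $S(\fa)$, and their extensions by coefficients in $B$) so that the sign in $(x \cdot f)(y) = f([y,x])$ matches the sign in $\phi_R$. Once the degree-one case is right, multiplicativity does the rest.
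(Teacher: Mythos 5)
Your proof is correct and is essentially the paper's argument in different packaging: the base case ($P = y\otimes 1$, reduced to simple tensors, with the sign coming from $(x\cdot f)(y) = f([y,x])$) is identical, and your observation that both sides are multiplicative modulo $s^2$ is exactly the content of the paper's inductive step on the degree $k$, since $X\cdot{}$ being a derivation is what makes $\phi_R$ a homomorphism. Replacing the induction by ``two algebra homomorphisms agreeing on generators'' is a clean and valid reformulation, and your handling of the extra generators $1\otimes B$ (on which $X$ acts by zero) is correct.
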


\begin{proof}
  Since both sides of \eqref{eq:linearization} are $\F$-linear in $P$, it suffices to prove the result for $P \in S^k(\fa) \otimes B$, $k \in \N$.  Similarly, from the definition \eqref{eq:adjoint-poly-action-coeffs}, it suffices to consider $P \in S^k (\fa) \otimes \F \cong S^k (\fa)$.  In addition, it is enough to consider $X$ of the form $x \otimes b_1$ and $F$ of the form $f \otimes b_2$, where $x \in \fa$, $f \in \fa^*$, and $b_1, b_2 \in B$.  We use induction on $k$.  The $k=0$ case is trivial.

  For $P = y \in \fa = S^1(\fa)$, we have
  \begin{align*}
    y \big( f \otimes b_2 + s (x \otimes b_1) \cdot (f \otimes b_2) \big)
    &= y \big( f \otimes b_2 + s (x \cdot f) \otimes b_1 b_2 \big) \\
    &= f(y) b_2 + s(x \cdot f)(y) b_1 b_2 \\
    &= f(y) b_2 - s f(x \cdot y) b_1 b_2 \\
    &= y(f \otimes b_2) - s \big( (x \otimes b_1) \cdot y \big) (f \otimes b_2).
  \end{align*}
  Now suppose that, for some $N \in \N_+$, the result holds for $P$ in $S^k(\fa)$, $k \le N$.  Let $X \in \fa \otimes B$, $F \in \fa^* \otimes B$, $P \in S^k(\fa)$, and $Q \in S^\ell(\fa)$, with $k,\ell \le N$.  Then we have
  \begin{align*}
    (PQ)(F + s(X \cdot F))
    &= P(F + s(X \cdot F)) Q(F + s(X \cdot F)) \\
    &\equiv \big( P(F) - s(X \cdot P)(F) \big) \big( Q(F) - s (X \cdot Q)(F) \big) \mod s^2 B[s] \\
    &\equiv P(F) Q(F) - s \big( (X \cdot P)(F) Q(F) + P(F) (X \cdot Q)(F) \big) \mod s^2 B[s] \\
    &\equiv (PQ)(F) - s \big( X \cdot (PQ) \big)(F) \mod s^2 B[s].
  \end{align*}
  This completes the proof of the inductive step.
\end{proof}

\subsection{Graded associative algebras}

Fix $\ell \in \N$, and define
\[
  \Gamma = \Z^\ell,\quad \Omega = \N^\ell.
\]
So $\Gamma$ is an abelian group and $\Omega$ is an abelian monoid under the usual addition.  There are natural actions of $\Omega$ and $\Gamma$ on themselves via their operations.

We define a partial order on $\Gamma$ by
\[
  (n_1,\dotsc,n_\ell) \le (m_1,\dotsc,m_\ell) \iff n_i \le m_i \quad \text{for all } i \in \{1,\dotsc,\ell\}.
\]
We write $(n_1,\dotsc,n_\ell) < (m_1,\dotsc,m_\ell)$ if $(n_1,\dotsc,n_\ell) \le (m_1,\dotsc,m_\ell)$ and $(n_1,\dotsc,n_\ell) \ne (m_1,\dotsc,m_\ell)$.

Suppose $B = \bigoplus_{\gamma \in \Gamma} B_\gamma$ is a finite-dimensional $\Gamma$-graded commutative $\F$-algebra.  For $\nu \in \Gamma$, we define
\[
  B_{> \nu} = \sum_{\nu < \gamma} B_\gamma.
\]
Let
\[
  B^* = \bigoplus_{\gamma \in \Gamma} B_\gamma^*,\quad B_\gamma^* = \Hom_\F(B_\gamma,\F),
\]
denote the restricted dual space.  This is naturally a $\Gamma$-graded vector space with the elements of $B_\gamma^*$ being of degree $- \gamma$.  In other words, $(B^*)_{-\gamma} = B_\gamma^*$.  The space $B^*$ is also naturally a $B$-module via the action defined by
\[
  (b \cdot f)(b') = f(bb'),\quad b,b' \in B,\ f \in B^*.
\]
This action is compatible with the $\Gamma$-gradings:
\[
  B_{\gamma_1} (B^*)_{\gamma_2} = (B^*)_{\gamma_1 + \gamma_2}.
\]

If $\mathcal{B}$ is some subset of $B$, we write
\[
  \F \langle \mathcal{B} \rangle = \F \langle b \mid b \in \cB \rangle = \Span_\F \mathcal{B}
\]
to denote the $\F$-span of the set $\mathcal{B}$.

Consider the polynomial algebra $\F[t_1,\dotsc,t_\ell]$ in $\ell$ indeterminates.  For $\omega = (\omega_1,\dotsc,\omega_\ell) \in \Omega$, we define $t^\omega = t_1^{\omega_1} t_2^{\omega_2} \dotsm t_\ell^{\omega_\ell}$.  Fix a subset $\Omega_0 \subseteq \Omega$ that is stable under the action of $\Omega$ (i.e.\ such that $\omega + \gamma \in \Omega_0$ for all $\omega \in \Omega$ and $\gamma \in \Omega_0$), and such that the complement $\Omega_1 = \Omega \setminus \Omega_0$ is finite.  Thus, $I = \F \langle t^\omega \mid \omega \in \Omega_0 \rangle$ is an ideal of $\F[t_1,\dotsc,t_\ell]$, and we define the $\F$-algebra
\[
  A = \F[t_1,\dotsc,t_\ell]/I.
\]
Let $\tau^\omega$ denote the image of $t^\omega$ in $A$, for $\omega \in \Omega$.  So $A$ has a basis
\[
  \{\tau^\omega \mid \omega \in \Omega_1\}.
\]
Let
\[
  \{\epsilon^\omega \mid \omega \in \Omega_1\}
\]
denote the dual basis of $A^*$.  The algebra $A$ is naturally $\Gamma$-graded, with the element $\tau^\omega$ being of degree $\omega$.  Similarly, $A^*$ is a $\Gamma$-graded vector space, with $\epsilon^\omega$ of degree $-\omega$.

Consider the group algebra
\[
  \F[\Gamma] = \{q^\gamma \mid \gamma \in \Gamma\},
\]
written in exponential notation, where $q$ is a formal symbol.  Define an $\F$-linear map
\begin{equation} \label{eq:imath-def}
  \imath \colon A \hookrightarrow \F[\Gamma],\quad \tau^\omega \mapsto q^\omega,\quad \omega \in \Omega_1.
\end{equation}
This map is manifestly injective.  We will continue to use the notation $\imath$ to denote the tensor product of $\imath$ with identity maps.  For example, if $\fa$ is a Lie algebra and $B$ is an associative algebra, we have the injective $\F$-linear map
\[
  \imath = \id_\fa \otimes \imath \otimes \id_B \colon \fa \otimes A \otimes B \hookrightarrow \fa \otimes \F[\Gamma] \otimes B.
\]
We also define the $\F$-linear map (which is manifestly injective)
\begin{equation} \label{eq:jmath-def}
  \jmath \colon A^* \hookrightarrow \F[\Gamma],\quad \epsilon^\omega \mapsto q^{-\omega},\quad \omega \in \Omega_1,
\end{equation}
and use the same notation $\jmath$ to denote the tensor product with identity maps.  Note that the maps \eqref{eq:imath-def} and \eqref{eq:jmath-def} are both maps that send homogeneous elements to their degrees.

\begin{rem} \label{rem:A-max-element-case}
  In the sequel, we shall be especially interested in the case where $\Omega_1$ has a greatest element $\mu$.  In this case, we have
  \begin{gather*}
    \Omega_1 = \{\omega \in \Omega \mid \omega \le \mu\},\quad
    \Omega_0 = \{\omega \in \Omega \mid \omega \not \le \mu\},\quad \text{and} \\
    A \cong \F[t_1]/(t_1^{\mu_1+1}) \otimes \F[t_2]/(t_2^{\mu_2+1}) \otimes \dotsb \otimes \F[t_\ell]/(t_\ell^{\mu_\ell+1}).
  \end{gather*}
\end{rem}

%
\section{Invariant polynomials}
%

\subsection{Definitions} \label{subsec:poly-defs}

Fix a finite-dimensional Lie algebra $\g$.  Through the paper, we will use the symbols $X,Y,Z$ to denote elements of $\g \otimes A$ or $\g \otimes \F[\Gamma]$ and the symbols $F,G,H$ to denote elements of $\g^* \otimes A^*$ or $\g^* \otimes \F[\Gamma]$.  We use the corresponding lowercase letters to denote the components of these elements:
\[
  X = \sum_{\omega \in \Omega_1} x_\omega \otimes \tau^\omega,\quad
  F = \sum_{\omega \in \Omega_1} f_\omega \otimes \epsilon^\omega,\quad \text{etc.}
\]

Let
\begin{equation} \label{eq:a-def}
  a = \sum_{\omega \in \Omega_1} \tau^\omega \in A.
\end{equation}
We have the associated $\F$-linear map
\begin{equation}
  \tau_a \colon \g \to \g \otimes A,\quad x \mapsto x \otimes a.
\end{equation}
This induces an algebra homomorphism
\begin{equation}
  \tau_a \colon S(\g) \to S(\g \otimes A),
\end{equation}
which we denote by the same symbol.  Now, the algebra $S(\g \otimes A)$ is naturally $\Gamma$-graded (as an algebra).  For $p \in S(\g)$ and $\gamma \in \Gamma$, we let $p_\gamma$ denote the degree $\gamma$ component of $\tau_a(p)$.  Thus, we have
\begin{equation}
  \tau_a(p) = \sum_{\gamma \in \Gamma} p_\gamma,\quad p_\gamma \in S(\g \otimes A)_\gamma.
\end{equation}

For subsets $\Upsilon_1,\dotsc,\Upsilon_k \subseteq \Gamma$, define
\begin{gather*}
  \Upsilon_1 + \Upsilon_2 + \dotsb + \Upsilon_k := \{\gamma_1 + \gamma_2 + \dotsb + \gamma_k \mid \gamma_i \in \Upsilon_i,\ i \in \{1,\dotsc,k\}\} \subseteq \Gamma.
\end{gather*}
Also, for a subset $\Upsilon \subseteq \Gamma$ and $k \in \N_+$, we define
\[
  \Upsilon^k = \Upsilon + \dotsb + \Upsilon \quad \text{($k$ factors)},\quad
  \Upsilon^0 = \{0\},
\]
and
\[
  - \Upsilon = \{- \gamma \mid \gamma \in \Upsilon\} \subseteq \Gamma.
\]

\begin{rem} \label{rem:zero-p-gamma}
  Note that, for $p \in S^k(\g)$, we have $p_\gamma = 0$ for $\gamma \not \in \Omega_1^k$.
\end{rem}

Since $\g$ and $A$ are finite dimensional, we may naturally identify $(\g \otimes A)^*$ with $\g^* \otimes A^*$.  Recall the definition of the map $\jmath$ given in \eqref{eq:jmath-def}.

\begin{lem} \label{lem:p-jmath-expansion}
  For $p \in S(\g)$ and $F \in \g^* \otimes A^*$, we have
  \begin{equation} \label{eq:p-jmath-F-decomp}
    p(\jmath(F)) = \sum_{\gamma \in \Omega} p_\gamma(F) q^{-\gamma}.
  \end{equation}
\end{lem}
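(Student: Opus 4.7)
The plan is to prove the identity by reducing to the case $p \in \g = S^1(\g)$ and then invoking multiplicativity in $p$. More precisely, I would observe that for fixed $F \in \g^* \otimes A^*$, both sides of \eqref{eq:p-jmath-F-decomp} are algebra homomorphisms $S(\g) \to \F[\Gamma]$ in the variable $p$. Since $\g$ generates $S(\g)$ as an algebra, it suffices to verify the identity when $p \in \g$.

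For the left-hand side, the fact that $p \mapsto p(\jmath(F))$ is an algebra homomorphism is immediate from the construction in Section~\ref{subsec:poly-funcs-adjoint}: with $B = \F[\Gamma]$, any element of $\g^* \otimes \F[\Gamma]$ (in particular $\jmath(F)$) induces an algebra homomorphism $S(\g) \to \F[\Gamma]$. For the right-hand side, I would first note that the $\Gamma$-grading on $S(\g \otimes A)$ is an algebra grading, so the map $\tau_a \colon S(\g) \to S(\g \otimes A)$ being an algebra homomorphism yields
\[
  (pp')_\gamma \;=\; \sum_{\alpha + \beta = \gamma} p_\alpha \, p'_\beta.
\]
Evaluating at $F$ and multiplying by $q^{-\gamma}$, the Cauchy product formula then gives
\[
  \sum_{\gamma \in \Omega} (pp')_\gamma(F) \, q^{-\gamma} \;=\; \Bigl(\sum_{\alpha \in \Omega} p_\alpha(F) q^{-\alpha}\Bigr)\Bigl(\sum_{\beta \in \Omega} p'_\beta(F) q^{-\beta}\Bigr),
\]
confirming that the right-hand side is also multiplicative in $p$.

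To check the base case, I would take $p = x \in \g$. On the right, $\tau_a(x) = x \otimes a = \sum_{\omega \in \Omega_1} x \otimes \tau^\omega$, so $x_\gamma = x \otimes \tau^\gamma$ for $\gamma \in \Omega_1$ and zero otherwise; evaluating at $F = \sum_\nu f_\nu \otimes \epsilon^\nu$ gives $x_\gamma(F) = f_\gamma(x)$, so the right-hand side equals $\sum_{\gamma \in \Omega_1} f_\gamma(x) q^{-\gamma}$. On the left, $\jmath(F) = \sum_{\omega \in \Omega_1} f_\omega \otimes q^{-\omega}$, and viewing $x$ as a polynomial function on $\g^* \otimes \F[\Gamma]$ taking values in $\F[\Gamma]$ yields $x(\jmath(F)) = \sum_{\omega \in \Omega_1} f_\omega(x) q^{-\omega}$. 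The two sides agree, completing the verification.

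I do not foresee any serious obstacle: the argument is essentially bookkeeping once one observes that both sides are algebra homomorphisms in $p$. The only thing to be careful about is matching the definitions of ``polynomial function'' (in the sense of Section~\ref{subsec:poly-funcs-adjoint}) on the two spaces $\g^* \otimes A^*$ and $\g^* \otimes \F[\Gamma]$, and confirming that the map $\jmath$ correctly intertwines the two pairings, which it does since $\epsilon^\omega(\tau^\nu) = \delta_{\omega,\nu}$ by the definition of the dual basis.
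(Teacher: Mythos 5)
Your proof is correct, but it takes a genuinely different route from the paper's. The paper first reduces, by linearity in $p$, to $p \in S^k(\g)$, and then attempts to reduce to the case of a rank-one functional $F = f \otimes \epsilon^\omega$, for which both sides are computed directly. You instead fix $F$ and exploit multiplicativity in $p$: both $p \mapsto p(\jmath(F))$ and $p \mapsto \sum_\gamma p_\gamma(F) q^{-\gamma}$ are (unital) algebra homomorphisms $S(\g) \to \F[\Gamma]$ --- the latter because $\tau_a$ is an algebra homomorphism, the $\Gamma$-grading is an algebra grading, evaluation at $F$ is multiplicative, and the Cauchy product does the rest --- so it suffices to check the identity on $S^1(\g)$, which you do correctly. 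Your reduction is arguably the safer one: the identity \eqref{eq:p-jmath-F-decomp} is \emph{not} linear in $F$ (both sides are degree-$k$ polynomial functions of $F$), so the reduction to simple tensors in $F$ is the delicate point of the paper's argument; indeed, the paper's intermediate display $p\bigl(\sum_\omega f_\omega \otimes q^{-\omega}\bigr) = \sum_\omega p(f_\omega) \otimes q^{-k\omega}$ omits the cross terms that arise for $k \ge 2$, and those cross terms are precisely what the components $p_\gamma(F)$ record. Your multiplicativity argument absorbs all cross terms automatically via the convolution formula $(pp')_\gamma = \sum_{\alpha+\beta=\gamma} p_\alpha p'_\beta$, so nothing is lost. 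The only cosmetic point is that you should note both homomorphisms send $1$ to $1$ (so that agreement on the generating subspace $\g$ really forces agreement on all of $S(\g)$, including $S^0(\g)$), but this is immediate.
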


\begin{proof}
  Since both sides of \eqref{eq:p-jmath-F-decomp} are $\F$-linear in $p$, it suffices to consider $p \in S^k(\g)$ for some $k \in \N_+$.  Then, for $F = \sum_{\omega \in \Omega_1} f_\omega \otimes \epsilon^\omega$, we have
  \[
    p (\jmath (F))
    = p \left( \sum_{\omega \in \Omega_1} f_\omega \otimes q^{-\omega} \right)
    = \sum_{\omega \in \Omega_1} p(f_\omega) \otimes q^{-k\omega}.
  \]
  Thus, it suffices to consider the case $F = f \otimes \epsilon^\omega$ for some $f \in \g^*$, and $\omega \in \Omega_1$.

  Since $p_\gamma(F) = 0$ for $\gamma \ne k \omega$, we have
  \[
    \tau_a(p)(F) = p_{k \omega}(F) = p(f)
    \quad \text{and} \quad
    p(\jmath(F)) = p(f \otimes q^{- \omega}) = p(f) q^{-k\omega}.
  \]
  The result follows.
\end{proof}

Notice that, in the setup of Lemma~\ref{lem:p-jmath-expansion}, we have that $p(\jmath(F))$ is an element of $\F[\Omega^{-1}]$, which is simply a polynomial algebra in $q_i := q^{-1_i}$, $i \in \{1,\dotsc,\ell\}$, where $1_i \in \Omega = \N^\ell$ is the element with $i$-th component equal to one and all other components equal to zero.  We can thus apply the partial derivatives $\partial_i := \frac{\partial}{\partial q_i}$.

\begin{cor} \label{cor:p-gamma-deriv-expression}
  For $p \in S(\g)$ and $\gamma \in \Omega$, we have
  \begin{equation}
    p_\gamma = \frac{1}{\gamma_1! \gamma_2! \dotsm \gamma_\ell!} \partial_1^{\gamma_1} \partial_2^{\gamma_2} \dotsb \partial_\ell^{\gamma_\ell} \big|_0 (p \circ \jmath),
  \end{equation}
  where the notation $\partial_1^{\gamma_1} \partial_2^{\gamma_2} \dotsb \partial_\ell^{\gamma_\ell} \big|_0$ denotes evaluation of the iterated derivative at $q_1=q_2=\dotsb=q_\ell=0$.
\end{cor}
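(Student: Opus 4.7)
The plan is to obtain the corollary as a direct consequence of Lemma~\ref{lem:p-jmath-expansion}, by extracting the coefficient of the monomial $q_1^{\gamma_1} \dotsm q_\ell^{\gamma_\ell}$ via iterated partial derivatives in the standard way. The statement really just reinterprets the expansion \eqref{eq:p-jmath-F-decomp} in terms of Taylor coefficients.

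First, I would rewrite \eqref{eq:p-jmath-F-decomp} using $q^{-\gamma} = q_1^{\gamma_1} q_2^{\gamma_2} \dotsm q_\ell^{\gamma_\ell}$, giving
\[
  (p \circ \jmath)(F) = \sum_{\gamma \in \Omega} p_\gamma(F)\, q_1^{\gamma_1} q_2^{\gamma_2} \dotsm q_\ell^{\gamma_\ell}.
\]
As in the proof of Lemma~\ref{lem:p-jmath-expansion}, it suffices by $\F$-linearity in $p$ to consider $p \in S^k(\g)$; then by Remark~\ref{rem:zero-p-gamma} only finitely many terms contribute (those with $\gamma \in \Omega_1^k$), so the right-hand side is genuinely a polynomial in the indeterminates $q_1,\dotsc,q_\ell$ with coefficients $p_\gamma(F) \in \F$ that do not depend on the $q_i$.

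Next, I would apply the operator $\partial_1^{\gamma_1} \dotsm \partial_\ell^{\gamma_\ell}\big|_0$ termwise. Since $\partial_i^{\gamma_i}(q_i^{\delta_i})\big|_0$ equals $\gamma_i!$ when $\delta_i = \gamma_i$ and $0$ otherwise, only the summand indexed by $\gamma$ survives, yielding
\[
  \partial_1^{\gamma_1} \dotsm \partial_\ell^{\gamma_\ell}\big|_0 (p \circ \jmath)(F) = \gamma_1! \gamma_2! \dotsm \gamma_\ell!\; p_\gamma(F).
\]
Since this identity holds for every $F \in \g^* \otimes A^*$, and elements of $S(\g \otimes A)$ are determined by their values as polynomial functions on $(\g \otimes A)^* \cong \g^* \otimes A^*$, dividing by $\gamma_1!\dotsm\gamma_\ell!$ gives the claimed equality in $S(\g \otimes A)$.

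There is no real obstacle: the whole content is packaged into Lemma~\ref{lem:p-jmath-expansion}, and the corollary is just the coefficient-extraction formula for polynomials applied to $p \circ \jmath$.
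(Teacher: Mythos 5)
Your proposal is correct and follows exactly the route the paper intends: the paper's proof is just ``this follows immediately from Lemma~\ref{lem:p-jmath-expansion},'' and your argument supplies the standard coefficient-extraction details (reduction to $p \in S^k(\g)$ so the sum is finite, termwise differentiation, and the identification of elements of $S(\g \otimes A)$ with their associated polynomial functions on $\g^* \otimes A^*$, valid since $\F$ is infinite). Nothing further is needed.
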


\begin{proof}
  This follows immediately from Lemma~\ref{lem:p-jmath-expansion}.
\end{proof}

\begin{rem} \label{rem:pgamma-RT}
  Suppose $\ell=1$, so that $A \cong \F[t]/(t^{m+1})$ for some $m$.  Then it follows from Lemma~\ref{lem:p-jmath-expansion} that, for $p \in S^k(\g)$, and $i \in \N = \Omega$, the polynomial $p_{km-i} \in S(\g \otimes A)_{km-i}$ is equal to $P_i$ as defined in \cite[\S3.1]{RT92}.
  \details{
    Consider the first centered equation of \cite[\S3.1]{RT92}, which defines the $P_i$.  We have
    \[
      p \left( \sum_{n=0}^m q^{n-m} f_{m-n} \right)
      = q^{-km} p \left( \sum_{n=0}^m q^n f_{m-n} \right)
      = q^{-km} \sum_{i \ge 0} q^i P_i \left( \sum_{j=0}^m f_j \otimes \epsilon^j \right).
    \]
    Then the result follows from Lemma~\ref{lem:p-jmath-expansion}.
  }
\end{rem}

The following lemma will be useful in the proof of Proposition~\ref{prop:poly-alg-ind}.

\begin{lem} \label{lem:double-invariant-poly}
  Assume $\Omega_1$ has a greatest element $\mu$.  Suppose $p \in S(\g)$ and $\gamma = (\gamma_1,\dotsc,\gamma_\ell) \in \Omega$.  By Remark~\ref{rem:A-max-element-case}, we have
  \[
    A \cong B \otimes \F[t_\ell]/(t_\ell^{\mu_\ell}),\quad \text{where} \quad
    B = \F[t_1,\dotsc,t_{\ell-1}]/ \F \langle t^\omega \mid \omega \not \le (\mu_1,\dotsc,\mu_{\ell-1}) \rangle.
  \]
  Identifying $A$ with $B \otimes \F[t_\ell]/(t_\ell^{\mu_\ell})$ via this isomorphism, we can form $p_{(\gamma_1,\dotsc,\gamma_{\ell-1})} \in S(\g \otimes B)$ and $\left( p_{(\gamma_1,\dotsc,\gamma_{\ell-1})} \right)_{\gamma_\ell} \in S(\g \otimes A)$.  Then we have $\left( p_{(\gamma_1,\dotsc,\gamma_{\ell-1})} \right)_{\gamma_\ell} = p_\gamma$.
\end{lem}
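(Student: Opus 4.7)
The plan is to derive the identity by computing two different expansions of the Laurent polynomial $p(\jmath(F)) \in \F[\Gamma]$ for arbitrary $F \in \g^* \otimes A^*$, and then comparing coefficients of monomials. The key observation is that, under the decomposition $A \cong B \otimes C$ of the statement, the map $\jmath$ factors as $\jmath = \jmath_B \otimes \jmath_C$, where $\jmath_B \colon B^* \hookrightarrow \F[\Gamma_B]$ and $\jmath_C \colon C^* \hookrightarrow \F[\Gamma_C]$ are the analogues of $\jmath$ for $B$ and $C$, with $\Gamma_B = \Z^{\ell-1}$, $\Gamma_C = \Z$, and $\F[\Gamma] = \F[\Gamma_B] \otimes \F[\Gamma_C]$.

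On the one hand, I would apply Lemma~\ref{lem:p-jmath-expansion} directly to obtain
\[
  p(\jmath(F)) = \sum_{(\gamma', \gamma_\ell) \in \Omega} p_{(\gamma', \gamma_\ell)}(F)\, q^{-\gamma'} q_\ell^{\gamma_\ell}.
\]
On the other hand, I would set $G = (\id \otimes \id \otimes \jmath_C)(F) \in \g^* \otimes B^* \otimes \F[\Gamma_C]$, noting that $\jmath(F) = (\id \otimes \jmath_B)(G)$ in $\g^* \otimes \F[\Gamma]$. Applying Lemma~\ref{lem:p-jmath-expansion} to $G$, treating $\F[\Gamma_C]$ as the scalar ring, would yield $p(\jmath(F)) = \sum_{\gamma' \in \Omega_B} p_{\gamma'}(G)\, q^{-\gamma'}$. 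A second application of the lemma, this time with $\g$ replaced by $\g \otimes B$ and $A$ replaced by $C$, together with the observation that $G = \jmath_C(F)$ when $F$ is viewed as an element of $(\g \otimes B)^* \otimes C^*$, would give $p_{\gamma'}(G) = \sum_{\gamma_\ell \in \N} (p_{\gamma'})_{\gamma_\ell}(F)\, q_\ell^{\gamma_\ell}$.

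Substituting the second expression into the first and comparing coefficients of the linearly independent monomials $q^{-\gamma'} q_\ell^{\gamma_\ell}$ in $\F[\Gamma]$, I would conclude that $p_{(\gamma', \gamma_\ell)}(F) = (p_{\gamma'})_{\gamma_\ell}(F)$ for all $F$, hence the corresponding equality of elements in $S(\g \otimes A)$. The hardest step will be to justify the first of the two applications above, in which Lemma~\ref{lem:p-jmath-expansion} is used with coefficients in the commutative ring $\F[\Gamma_C]$; this is handled by the general framework of Section~\ref{subsec:poly-funcs-adjoint}, in which polynomial functions on $\g^* \otimes B'$ take values in any commutative algebra $B'$, so that the proof of the lemma (which is $\F$-linear in its inputs) goes through verbatim in that generality. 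The rest of the argument is straightforward bookkeeping with the bigrading $\Gamma = \Gamma_B \oplus \Gamma_C$.
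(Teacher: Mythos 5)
Your proposal is correct, but it takes a genuinely different route from the one in the paper. The paper's own proof is purely symbolic and very short: writing $C=\F[t_\ell]/(t_\ell^{\mu_\ell+1})$, it sets $b=\pi_B\bigl(\sum_{\omega\le(\mu_1,\dotsc,\mu_{\ell-1})}t^\omega\bigr)\in B$ and $c=\pi_\ell\bigl(\sum_{n\le\mu_\ell}t_\ell^{n}\bigr)\in C$, observes that under the isomorphism $A\cong B\otimes C$ the element $a$ of \eqref{eq:a-def} is $b\otimes c$, so that the induced algebra homomorphisms satisfy $\tau_c\circ\tau_b=\tau_a$ on $S(\g)$, and then reads off the claim by comparing graded components, the $\Gamma$-grading on $S(\g\otimes A)$ being the juxtaposition of the $\Z^{\ell-1}$-grading coming from $B$ and the $\Z$-grading coming from $t_\ell$. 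You instead dualize: you evaluate at an arbitrary linear form $F$, expand $p(\jmath(F))$ in two ways using Lemma~\ref{lem:p-jmath-expansion} and the factorization $\jmath=\jmath_B\otimes\jmath_C$, and compare coefficients in $\F[\Gamma]$. This is valid, and it has the virtue of staying inside the generating-function formalism used throughout Section~3, but it costs two justifications that the paper's argument avoids entirely: (i) your first expansion needs Lemma~\ref{lem:p-jmath-expansion} for forms with coefficients in the commutative ring $\F[\Gamma_C]$ --- you flag this correctly, and the framework of Section~\ref{subsec:poly-funcs-adjoint} does support it, but the lemma as stated does not literally cover that case, so the extension must be written out; and (ii) your last step passes from the identity $p_{(\gamma',\gamma_\ell)}(F)=(p_{\gamma'})_{\gamma_\ell}(F)$ for all $F$ to an equality in $S(\g\otimes A)$, which uses injectivity of the evaluation map $S(V)\to\operatorname{Fun}(V^*,\F)$ for $V$ finite dimensional over the infinite field $\F$; this is true (characteristic zero) and is used implicitly elsewhere in the paper, for instance at the end of the proof of Proposition~\ref{prop:pgamma-invariant}, but it should be stated. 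In short: the paper's proof buys brevity by working directly with $\tau_a=\tau_c\circ\tau_b$ and never evaluating; yours buys uniformity with the surrounding arguments at the price of the two riders above.
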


\begin{proof}
  Define
  \[ \ts
    b = \pi_B \left( \sum_{\omega \in \N^{\ell-1},\, \omega \le (\mu_1,\dotsc,\mu_{\ell-1})} t^\omega \right) \in B
    \quad \text{and} \quad
    c = \pi_\ell \left( \sum_{\omega_\ell \in \N,\, \omega_\ell \le \mu_\ell} t_\ell^{\omega_\ell} \right) \in \F[t_\ell]/(t_\ell^{\mu_\ell+1}),
  \]
  where
  \[
    \pi_B \colon \F[t_1,\dotsc,t_{\ell-1}] \twoheadrightarrow B
    \quad \text{and} \quad
    \pi_\ell \colon \F[t_\ell] \twoheadrightarrow \F[t_\ell]/(t_\ell^{\mu_\ell})
  \]
  are the canonical projections.  We then have the $\F$-linear maps
  \[
    \tau_b \colon \g \to \g \otimes B,\ x \mapsto x \otimes b \quad \text{and} \quad
    \tau_c \colon \g \otimes B \to \g \otimes A,\ x \mapsto x \otimes c.
  \]
  which induce algebra homomorphisms $\tau_b \colon S(\g) \to S(\g \otimes B)$ and $\tau_c \colon S(\g \otimes B) \to S(\g \otimes A)$.  Then the result follows from the fact that $\tau_c \circ \tau_b = \tau_a$, where $a$ is defined as in \eqref{eq:a-def}.
\end{proof}

\subsection{Invariance}

\begin{lem} \label{lem:adjoint-equivariance}
  For $X \in \g \otimes A$ and $F \in \g^* \otimes A^*$, we have
  \[
    \jmath (X \cdot F) - \imath(X) \cdot \jmath(F) \in \g^* \otimes \F \langle (\Omega_1 - \Omega_1) \setminus - \Omega_1 \rangle.
  \]
\end{lem}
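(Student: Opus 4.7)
The plan is to verify the inclusion by direct computation on pure tensors, then read off the exponents that appear in the difference.

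First, I would reduce by $\F$-bilinearity to the case $X = x \otimes \tau^\alpha$ and $F = f \otimes \epsilon^\beta$, where $x \in \g$, $f \in \g^*$, and $\alpha,\beta \in \Omega_1$. On this side, both $\jmath(X \cdot F)$ and $\imath(X) \cdot \jmath(F)$ factor cleanly through the $\g^*$-part $x \cdot f$, so everything boils down to computing $\tau^\alpha \cdot \epsilon^\beta$ in $A^*$ and comparing it, after applying $\jmath$, with $q^\alpha \cdot q^{-\beta} = q^{\alpha - \beta}$ in $\F[\Gamma]$.

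Next, I would compute $\tau^\alpha \cdot \epsilon^\beta$ explicitly from the dual-basis description of $A^*$: evaluating on a basis vector $\tau^\rho$ with $\rho \in \Omega_1$ gives $\epsilon^\beta(\tau^{\alpha + \rho})$, which is $1$ exactly when $\alpha + \rho = \beta$ and $\alpha + \rho \in \Omega_1$, and $0$ otherwise. Since $\beta \in \Omega_1$, the first condition alone determines the result: we get $\tau^\alpha \cdot \epsilon^\beta = \epsilon^{\beta - \alpha}$ when $\beta - \alpha \in \Omega_1$, and $\tau^\alpha \cdot \epsilon^\beta = 0$ otherwise. Applying $\jmath$ then gives $\jmath(X \cdot F) = (x \cdot f) \otimes q^{\alpha - \beta}$ in the first case and $0$ in the second, while $\imath(X) \cdot \jmath(F) = (x \cdot f) \otimes q^{\alpha - \beta}$ in all cases.

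Therefore the difference $\jmath(X \cdot F) - \imath(X) \cdot \jmath(F)$ vanishes whenever $\beta - \alpha \in \Omega_1$, and otherwise equals $-(x \cdot f) \otimes q^{\alpha - \beta}$. In the nonvanishing case, the exponent $\alpha - \beta$ lies in $\Omega_1 - \Omega_1$ by construction, while the hypothesis $\beta - \alpha \notin \Omega_1$ is precisely the statement that $\alpha - \beta \notin -\Omega_1$. Thus every exponent that survives in the difference belongs to $(\Omega_1 - \Omega_1) \setminus -\Omega_1$, which yields the claimed containment.

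I do not anticipate a serious obstacle: the argument is essentially bookkeeping, with the only mild subtlety being to distinguish the two reasons that $\tau^\alpha \cdot \epsilon^\beta$ can fail to equal $\epsilon^{\beta - \alpha}$ (namely, $\beta - \alpha$ having a negative coordinate versus $\beta - \alpha \in \Omega_0$), both of which are conveniently encoded in the single condition $\beta - \alpha \notin \Omega_1$.
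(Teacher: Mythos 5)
Your proposal is correct and follows essentially the same route as the paper: reduce to pure tensors by bilinearity, compute $\tau^\alpha \cdot \epsilon^\beta$ on the dual basis, and split into the cases $\beta - \alpha \in \Omega_1$ (where the difference vanishes) and $\beta - \alpha \notin \Omega_1$ (where the surviving exponent $\alpha - \beta$ lies in $(\Omega_1 - \Omega_1) \setminus -\Omega_1$). The only cosmetic difference is that your worry about the subcase $\beta - \alpha \in \Omega_0$ is vacuous, since $\Omega_0 + \Omega \subseteq \Omega_0$ and $\beta \in \Omega_1$ force $\beta - \alpha \in \Omega_1$ whenever $\beta - \alpha \in \Omega$; your uniform condition $\beta - \alpha \notin \Omega_1$ handles everything correctly regardless.
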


\begin{proof}
  Since the expression $\jmath(X \cdot F) - \imath(X) \cdot \jmath(F)$ is $\F$-linear in both $X$ and $F$, it suffices to consider the case $X = x \otimes \tau^\omega$ and $F = f \otimes \epsilon^\lambda$ for some $x \in \g$, $f \in \g^*$, and $\omega, \lambda \in \Omega_1$.  We split the proof into two cases.  First, suppose that $\lambda - \omega \in \Omega_1$.  Then, for $\gamma \in \Omega_1$, we have
  \[
    \tau^\omega \cdot \epsilon^\lambda(\tau^\gamma)
    = \epsilon^\lambda(\tau^{\omega + \gamma})
    = \delta_{\lambda, \omega + \gamma}
    = \delta_{\lambda - \omega, \gamma}
    = \epsilon^{\lambda - \omega}(\tau^\gamma).
  \]
  and so
  \[
    X \cdot F = (x \cdot f) \otimes \epsilon^{\lambda-\omega}.
  \]
  Therefore
  \[
    \jmath (X \cdot F)
    = (x \cdot f) \otimes q^{\omega-\lambda}
    = \imath(X) \cdot \jmath(F).
  \]
  On the other hand, if $\lambda - \omega \not \in \Omega_1$, then $X \cdot F = 0$ and
  \[
    \imath(X) \cdot \jmath(F)
    = (x \otimes q^\omega) \cdot (f \otimes q^{-\lambda})
    = (x \cdot f) \otimes q^{\omega-\lambda}
    \in \g^* \otimes \F \langle (\Omega_1 - \Omega_1) \setminus - \Omega_1 \rangle. \qedhere
  \]
\end{proof}

\begin{lem} \label{lem:polynomial-error-terms}
  Suppose $p \in S^k(\g)$ for some $k \in \N_+$.  Then, for all $F \in \g^* \otimes \F \langle - \Omega_1 \rangle$ and $G \in \g^* \otimes \F \langle (\Omega_1 - \Omega_1) \setminus -\Omega_1 \rangle$, we have
  \[
    p(F+G) - p(F) \in \F \langle \Phi_k \rangle,
  \]
  where
  \begin{equation} \label{eq:Upsilonk-def}
    \Phi_k = \bigcup_{j=1}^k \Big( \big( (\Omega_1 - \Omega_1) \setminus -\Omega_1 \big)^j + (-\Omega_1)^{k-j} \Big).
  \end{equation}
\end{lem}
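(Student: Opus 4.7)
The plan is to induct on the degree $k$, in parallel with the proof of Lemma~\ref{lem:linearization}. By $\F$-linearity of the expression $p(F+G) - p(F)$ in $p$, it suffices to consider $p$ a product of elements of degree $\le N$ at the inductive step, and in the base case $k=1$ to consider $p = y \in \g$, where $p(F+G) - p(F) = G(y) \in \F\langle (\Omega_1 - \Omega_1) \setminus -\Omega_1 \rangle = \F\langle \Phi_1 \rangle$, as required.

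For the inductive step, suppose the result holds for all $p \in S^j(\g)$ with $j \le N$, and let $p = p_1 p_2$ with $p_i \in S^{k_i}(\g)$ and $k_1 + k_2 = N+1$. Setting $\Delta_i := p_i(F+G) - p_i(F)$ for $i = 1,2$, I would expand
\[
  (p_1 p_2)(F+G) - (p_1 p_2)(F)
  = p_1(F)\, \Delta_2 + \Delta_1\, p_2(F) + \Delta_1 \Delta_2,
\]
and then invoke the inductive hypothesis $\Delta_i \in \F\langle \Phi_{k_i} \rangle$ together with the trivial observation $p_i(F) \in \F\langle (-\Omega_1)^{k_i} \rangle$, which follows since $F \in \g^* \otimes \F\langle -\Omega_1 \rangle$ and $p_i$ is homogeneous of degree $k_i$.

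The remaining content is purely combinatorial: one must verify the three inclusions
\[
  (-\Omega_1)^{k_1} + \Phi_{k_2} \subseteq \Phi_{N+1}, \quad
  \Phi_{k_1} + (-\Omega_1)^{k_2} \subseteq \Phi_{N+1}, \quad
  \Phi_{k_1} + \Phi_{k_2} \subseteq \Phi_{N+1}.
\]
Unpacking the definition \eqref{eq:Upsilonk-def}, each of these reduces to the identity $\big((\Omega_1 - \Omega_1) \setminus -\Omega_1\big)^{i_1+i_2} + (-\Omega_1)^{(k_1-i_1)+(k_2-i_2)} \subseteq \Phi_{N+1}$ for appropriate indices $i_1, i_2$ with $i_1 + i_2 \ge 1$, which is immediate by setting $j = i_1 + i_2$.

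I expect no serious obstacle; the argument is routine once one commits to the inductive setup. The only mildly delicate point is the combinatorial bookkeeping in the last paragraph, which must correctly track the degrees contributed by $F$ versus $G$ when expanding the product $\Delta_1 \Delta_2$.
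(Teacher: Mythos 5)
Your proof is correct and rests on the same idea as the paper's: expand the product and note that every term other than the pure-$F$ term contains at least one factor valued in $\F \langle (\Omega_1 - \Omega_1) \setminus -\Omega_1 \rangle$, hence lands in $\big( (\Omega_1 - \Omega_1) \setminus -\Omega_1 \big)^j + (-\Omega_1)^{k-j}$ for some $j \ge 1$. The paper simply does this in one step, reducing to a monomial $p = x_1 \dotsm x_k$ and expanding $\prod_{i} \big( x_i(F) + x_i(G) \big)$ directly, which makes your induction and the three set inclusions unnecessary.
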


\begin{proof}
  It suffices to consider the case where $p$ is a monomial, since monomials span $S^k(\g)$.  Therefore, take $p=x_1 x_2 \dotsm x_k$, for some $x_1,\dotsc,x_k \in \g$.  Then we have
  \[
    p(F+G)
    = x_1(F+G) \dotsm x_k(F+G)
    = \big( x_1(F) + x_1(G) \big) \dotsm \big( x_k(F) + x_k(G) \big),
  \]
  and the result follows from the fact that $x_i(F) \in \F \langle -\Omega_1 \rangle$ and $x_i(G) \in \F \langle (\Omega_1 - \Omega_1) \setminus -\Omega_1 \rangle$ for all $i \in \{1,\dotsc,k\}$.
\end{proof}

\begin{prop} \label{prop:pgamma-invariant}
  Suppose $p \in S^k(\g)^\g := S^k(\g) \cap S(\g)^\g$ for some $k \in \N_+$.  Then
  \[
    p_\gamma \in S^k(\g \otimes A)^{\g \otimes A}
    \quad \text{for all }
    \gamma \not \in - \Phi_k = \bigcup_{j=1}^k \Big( \big( (\Omega_1 - \Omega_1) \setminus \Omega_1 \big)^j + \Omega_1^{k-j} \Big).
  \]
\end{prop}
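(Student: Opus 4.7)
The plan is to compute
\[
  E := p\big(\jmath(F + s(X \cdot F))\big) \in \F[\Gamma][s]
\]
modulo $s^2$ in two ways, where $X \in \g \otimes A$ and $F \in \g^* \otimes A^*$ are fixed and $s$ is a formal parameter, and then compare the results. The point is that, by Lemma~\ref{lem:p-jmath-expansion}, all the $p_\gamma$ are packaged as the coefficients of $q^{-\gamma}$ in $p(\jmath(F))$, so a single calculation handles them simultaneously.

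For the first computation, we apply Lemma~\ref{lem:p-jmath-expansion} to $F + s(X \cdot F)$ and then Lemma~\ref{lem:linearization} on the Lie algebra $\g \otimes A$ (with trivial coefficient ring $B = \F$) to each $p_\gamma$, obtaining
\[
  E = \sum_\gamma p_\gamma\big(F + s(X \cdot F)\big)\, q^{-\gamma} \equiv p(\jmath(F)) - s \sum_\gamma (X \cdot p_\gamma)(F)\, q^{-\gamma} \pmod{s^2 \F[\Gamma][s]}.
\]
For the second, Lemma~\ref{lem:adjoint-equivariance} lets us write $\jmath(X \cdot F) = \imath(X) \cdot \jmath(F) + H$ with $H \in \g^* \otimes \F\langle (\Omega_1 - \Omega_1) \setminus -\Omega_1 \rangle$. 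Setting $F' = \jmath(F) + s\,\jmath(X \cdot F) = \jmath(F + s(X \cdot F))$ and $G' = -sH$, we find $F' + G' = \jmath(F) + s\,\imath(X) \cdot \jmath(F)$. The $\F[s]$-coefficient version of Lemma~\ref{lem:polynomial-error-terms} (whose proof adapts verbatim over any commutative coefficient ring) yields $p(F') - p(F' + G') \in \F\langle \Phi_k \rangle \otimes \F[s]$, while the $\g$-invariance of $p$ combined with Lemma~\ref{lem:linearization} applied to $\g$ with coefficient ring $\F[\Gamma]$ gives $p(F' + G') \equiv p(\jmath(F)) \pmod{s^2 \F[\Gamma][s]}$. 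Since $E = p(F')$, combining these yields
\[
  E \equiv p(\jmath(F)) \pmod{\F\langle \Phi_k \rangle \otimes \F[s] + s^2 \F[\Gamma][s]}.
\]

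Comparing the two evaluations of $E$ modulo $s^2$ and extracting the coefficient of $s$, we obtain $\sum_\gamma (X \cdot p_\gamma)(F)\, q^{-\gamma} \in \F\langle \Phi_k \rangle$. Linear independence of the $q^{-\gamma}$ in $\F[\Gamma]$ then forces $(X \cdot p_\gamma)(F) = 0$ for every $\gamma$ with $-\gamma \notin \Phi_k$, i.e.\ $\gamma \notin -\Phi_k$; as $X$ and $F$ are arbitrary, this gives $p_\gamma \in S^k(\g \otimes A)^{\g \otimes A}$ (the degree statement being automatic since $\tau_a$ is graded). The main delicacy is the careful bookkeeping of the support of the error term $H$; the set $\Phi_k$ has been engineered precisely to absorb this discrepancy, so the two-evaluation comparison converts the failure of $\jmath$ to be exactly equivariant into the stated restriction on $\gamma$.
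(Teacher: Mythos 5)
Your proposal is correct and follows essentially the same route as the paper's proof: the paper likewise evaluates $p\big(\jmath(F + s(X\cdot F))\big)$ modulo $s^2$ in two ways (one via Lemma~\ref{lem:p-jmath-expansion} and Lemma~\ref{lem:linearization} applied to $\g\otimes A$, the other via Lemmas~\ref{lem:adjoint-equivariance}, \ref{lem:polynomial-error-terms}, \ref{lem:linearization} and the $\g$-invariance of $p$), then compares coefficients of $q^{-\gamma}$ in the $s$-linear term. Your explicit remark that Lemma~\ref{lem:polynomial-error-terms} must be read with $\F[s]$-coefficients is a point the paper leaves implicit, but the argument is the same.
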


\begin{proof}
  Suppose $p \in S^k(\g)^\g$ and define $\Phi_k$ as in \eqref{eq:Upsilonk-def}.  Then, for all $X \in \g \otimes A$ and $F \in \g^* \otimes A^*$, we have
  \begin{align*}
    \sum_{\gamma \in \Gamma} \big( p_\gamma(F) - s (X \cdot p_\gamma) (F) \big) q^{-\gamma}
    &\equiv \sum_{\gamma \in \Omega} p_\gamma(F + s X \cdot F) q^{-\gamma} \mod \F[\Gamma] \otimes s^2 \F[s] \\
    &\equiv p \big( \jmath(F + sX \cdot F) \big) \mod\F[\Gamma] \otimes s^2 \F[s] \\
    &\equiv p \big( \jmath(F) + s \jmath(X \cdot F) \big) \mod \F[\Gamma] \otimes s^2 \F[s] \\
    &\equiv p \big( \jmath(F) + s \imath(X) \cdot \jmath(F) \big) \mod \big( \F \langle \Phi_k \rangle \otimes \F[s] + \F[\Gamma] \otimes s^2 \F[s] \big) \\
    &\equiv p(\jmath(F)) - s( \imath(X) \cdot p )(\jmath(F)) \mod \big( \F \langle \Phi_k \rangle \otimes \F[s] + \F[\Gamma] \otimes s^2 \F[s] \big) \\
    &\equiv p(\jmath(F)) \mod \big( \F \langle \Phi_k \rangle \otimes \F[s] + \F[\Gamma] \otimes s^2 \F[s] \big) \\
    &\equiv \sum_{\gamma \in \Gamma} p_\gamma(F) q^{-\gamma} \mod \big( \F \langle \Phi_k \rangle \otimes \F[s] + \F[\Gamma] \otimes s^2 \F[s] \big)
  \end{align*}
  where the first equivalence uses Lemma~\ref{lem:linearization} and Remark~\ref{rem:zero-p-gamma}, the second and seventh follow from Lemma~\ref{lem:p-jmath-expansion}, the fourth holds by Lemmas~\ref{lem:adjoint-equivariance} and~\ref{lem:polynomial-error-terms}, the fifth follows from Lemma~\ref{lem:linearization}, and the sixth follows from the fact that $p$ is $\g$-invariant.  Therefore, we have $X \cdot p_\gamma =0$, for all $\gamma \not \in -\Phi_k$.
\end{proof}

\begin{rem}
  Note that while the main results of the current paper will assume that $\Omega_1$ has a greatest element,  Proposition~\ref{prop:pgamma-invariant} does \emph{not} require this assumption.  It would be interesting to investigate the properties of the space $S(\g \otimes A)^{\g \otimes A}$ more generally, and to determine, for instance, when the invariant polynomials of Proposition~\ref{prop:pgamma-invariant} generate this space.  Note also that, since $\Omega_1$ is finite, one can always choose an element $\mu \in \Omega$ such that $\omega \le \mu$ for all $\omega \in \Omega_1$.  Then we have the projection
  \[
    \F[t_1,\dotsc,t_\ell]/\F \langle t^\omega \mid \omega \in \Omega,\ \omega \not \le \mu \rangle
    \twoheadrightarrow A,
  \]
  and so representations of $\g \otimes A$ can be pulled back to the setting where we have a greatest element.
\end{rem}

\begin{lem} \label{lem:index-set-max-element}
  If $\Omega_1$ has a greatest element $\mu$, then
  \[
    \Omega_1^k \setminus (-\Phi_k) = k\mu - \Omega_1.
  \]
\end{lem}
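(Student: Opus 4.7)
The plan is to exploit the explicit description of $\Omega_1$ from Remark~\ref{rem:A-max-element-case}, namely $\Omega_1 = \{\omega \in \Omega \mid \omega \le \mu\}$. This gives closed-form descriptions of the three sets in play:
\[
  \Omega_1^k = \{\gamma \in \Omega \mid \gamma \le k\mu\}, \quad
  \Omega_1 - \Omega_1 = \{\gamma \in \Gamma \mid -\mu \le \gamma \le \mu\}, \quad
  k\mu - \Omega_1 = \{\gamma \in \Omega \mid (k-1)\mu \le \gamma \le k\mu\}.
\]
(The first and second follow by componentwise decomposition: given $\gamma$ with $-\mu_i \le \gamma_i \le \mu_i$, set $\omega_1 = \max(\gamma,0)$ and $\omega_2 = \max(-\gamma, 0)$ componentwise.) I would record these at the top of the proof.

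For the inclusion $k\mu - \Omega_1 \subseteq \Omega_1^k \setminus (-\Phi_k)$, the containment in $\Omega_1^k$ is immediate since $(k-1)\mu \le \gamma \le k\mu$. For $\gamma \notin -\Phi_k$, I would argue by contradiction: if $\gamma = \gamma_1 + \cdots + \gamma_j + \omega_1 + \cdots + \omega_{k-j}$ with $j \ge 1$, each $\gamma_i \in (\Omega_1 - \Omega_1) \setminus \Omega_1$, and each $\omega_i \in \Omega_1$, then $\gamma_1$ has some strictly negative coordinate, say $\gamma_1^{(\alpha)} \le -1$. Bounding the remaining $j-1$ terms $\gamma_i^{(\alpha)} \le \mu_\alpha$ and the $k-j$ terms $\omega_i^{(\alpha)} \le \mu_\alpha$ yields $\gamma^{(\alpha)} \le -1 + (k-1)\mu_\alpha < (k-1)\mu_\alpha$, contradicting $\gamma \ge (k-1)\mu$.

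For the reverse inclusion, suppose $\gamma \in \Omega_1^k$ with $\gamma \not\ge (k-1)\mu$, and pick $\alpha$ with $\gamma^{(\alpha)} \le (k-1)\mu_\alpha - 1$; note $\mu_\alpha \ge 1$ since $\gamma^{(\alpha)} \ge 0$. I would exhibit $\gamma$ as an element of $((\Omega_1 - \Omega_1) \setminus \Omega_1) + \Omega_1^{k-1}$ by constructing $\gamma_1 \in \Gamma$ with components
\[
  \gamma_1^{(\alpha)} = -1, \qquad \gamma_1^{(i)} = \max\bigl(0, \gamma^{(i)} - (k-1)\mu_i\bigr) \text{ for } i \ne \alpha.
\]
A quick check gives $-\mu \le \gamma_1 \le \mu$ and $\gamma_1^{(\alpha)} < 0$, so $\gamma_1 \in (\Omega_1 - \Omega_1) \setminus \Omega_1$. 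Meanwhile, each coordinate of $\gamma - \gamma_1$ lies in $[0, (k-1)\mu_i]$, so it admits a componentwise decomposition into $k-1$ elements of $\Omega_1$ (working each coordinate independently, e.g. greedily using $\mu_i$'s and a remainder). This places $\gamma$ in $-\Phi_k$ via the $j=1$ term of the union.

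The main obstacle is keeping the coordinatewise bookkeeping straight in the forward construction of $\gamma_1$, in particular verifying simultaneously that $\gamma_1 \in (\Omega_1 - \Omega_1) \setminus \Omega_1$ and that $\gamma - \gamma_1 \in \Omega_1^{k-1}$; everything else is a direct check using the description of $\Omega_1$ as an order ideal.
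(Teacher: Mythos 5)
Your proof is correct and follows essentially the same route as the paper: both arguments reduce everything to the explicit coordinatewise descriptions of $\Omega_1$, $\Omega_1-\Omega_1$, and $(\Omega_1-\Omega_1)\setminus\Omega_1$ afforded by the greatest element $\mu$, and then identify $\Omega_1^k\setminus(-\Phi_k)$ as the box $\{\gamma \mid (k-1)\mu \le \gamma \le k\mu\}$. Your two-inclusion writeup, with the explicit witness $\gamma_1$ in the reverse direction, is somewhat more detailed than the paper's single displayed computation of $\Omega_1^k \cap (-\Phi_k)$, but the underlying idea is identical.
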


\begin{proof}
  If $\Omega_1$ has a greatest element $\mu$, then
  \begin{gather*}
    \Omega_1 = \{\gamma \in \Gamma \mid 0 \le \gamma \le \mu\},\quad
    \Omega_1 - \Omega_1 = \{\gamma \in \Gamma \mid -\mu \le \gamma \le \mu\}, \quad \text{and} \\
    (\Omega_1 - \Omega_1) \setminus \Omega_1 = \{\gamma \in \Gamma \mid -\mu_i \le \gamma_i \le \mu_i \text{ for all } i, \text{ and } \gamma_i<0 \text{ for at least one } i\}.
  \end{gather*}
  Therefore,
  \begin{align*}
    \Omega_1^k &\cap (-\Phi_k)
    = \Omega_1^k \cap \bigcup_{j=1}^k \Big( \big( (\Omega_1 - \Omega_1) \setminus \Omega_1 \big)^j + \Omega_1^{k-j} \Big) \\
    &= \Omega_1^k \cap \bigcup_{j=1}^k \left\{ \gamma \in \Gamma \mid -j\mu_i \le \gamma_i \le k\mu_i - p_i(\mu_i+1) \text{ for all } i, \text{ and some } p_1,\dotsc,p_\ell \text{ with } {\textstyle \sum_i p_i = j} \right\} \\
    &= \left\{ \gamma \in \Gamma \mid 0 \le \gamma_i \le k\mu_i \text{ for all } i, \text{ and } \gamma_i < (k-1) \mu_i \text{ for at least one } i\right\}.
  \end{align*}
  It follows that
  \[
    \Omega_1^k \setminus (-\Phi_k) = \{\omega \in \Omega \mid (k-1)\mu \le \omega \le k \mu\} = k \mu - \Omega_1. \qedhere
  \]
\end{proof}

Suppose $p \in S^k(\g)^\g$, $k \in \N_+$, and assume that $\Omega_1$ has a greatest element $\mu$.  It follows from Proposition~\ref{prop:pgamma-invariant} and Lemma~\ref{lem:index-set-max-element} that $p_\gamma$ lies in $S^k(\g \otimes A)^{\g \otimes A}$ and is (potentially) nonzero for $\gamma$ in the set $k\mu - \Omega_1$.

\subsection{Algebraic independence}

Throughout this subsection, we assume that $\Omega_1$ has a greatest element $\mu$.

\begin{prop} \label{prop:poly-alg-ind}
  A collection of elements
  \begin{equation} \label{eq:algind-g}
    p^{(i)} \in S^{k_i}(\g),\quad 1 \le i \le r,\ k_i \in \N_+,
  \end{equation}
  is algebraically independent if and only if the collection
  \begin{equation} \label{eq:algind-gA}
    p^{(i)}_\omega \in S^{k_i}(\g \otimes A),\quad 1 \le i \le r,\ \omega \in k_i \mu - \Omega_1,
  \end{equation}
  is algebraically independent.
\end{prop}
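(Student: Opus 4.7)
I plan to prove the two implications separately. The direction ``$\{p^{(i)}_\omega\}$ algebraically independent $\Rightarrow \{p^{(i)}\}$ algebraically independent'' is the easier one. The linear map $\g \to \g \otimes A$, $x \mapsto x \otimes \tau^\mu$, is injective and therefore induces an injective algebra homomorphism $\tau_\mu \colon S(\g) \to S(\g \otimes A)$. I would verify that $\tau_\mu(p^{(i)}) = p^{(i)}_{k_i \mu}$: expanding $\tau_a(p^{(i)})$ as a symmetric product of $k_i$ factors of the form $x \otimes \sum_{\omega \in \Omega_1} \tau^\omega$, the $\Gamma$-degree $k_i \mu$ component picks out precisely the summand where $\tau^\mu$ is chosen in every factor, since $\mu$ is the greatest element of $\Omega_1$ and any smaller choice would yield a strictly smaller total degree. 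Injectivity of $\tau_\mu$ then shows that algebraic independence of the subfamily $\{p^{(i)}_{k_i\mu}\}_{i=1}^r \subseteq \{p^{(i)}_\omega\}$ forces algebraic independence of $\{p^{(i)}\}$.

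For the converse, I would apply the Jacobian criterion for algebraic independence in characteristic zero. Fix a basis $x_1, \dotsc, x_n$ of $\g$ and set $y_{b, \omega} := x_b \otimes \tau^\omega$, so that $S(\g \otimes A) = \F[y_{b, \omega}]$ with $\Gamma$-grading placing $y_{b, \omega}$ in degree $\omega$, and $\tau_a$ is the substitution $x_b \mapsto \sum_{\omega \in \Omega_1} y_{b, \omega}$. By the chain rule,
\[
  \frac{\partial \tau_a(p)}{\partial y_{b, \omega}} = \tau_a\!\left( \frac{\partial p}{\partial x_b} \right),
\]
which is independent of $\omega$; extracting $\Gamma$-homogeneous parts yields $\partial p_\gamma / \partial y_{b, \omega} = (\partial p / \partial x_b)_{\gamma - \omega}$. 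I would then evaluate the Jacobian at the point $P_\xi$ defined by $y_{b, \mu} = \xi_b$ for some $\xi \in \F^n$ and all other $y_{b, \omega} = 0$. At $P_\xi$, only monomials in the $y_{b, \mu}$'s survive, and these have $\Gamma$-degree a multiple of $\mu$; since $\partial p^{(i)}/\partial x_b \in S^{k_i - 1}(\g)$, the value $(\partial p^{(i)}/\partial x_b)_{\gamma - \omega}(P_\xi)$ vanishes unless $\gamma - \omega = (k_i - 1)\mu$, in which case it equals $(\partial p^{(i)}/\partial x_b)(\xi)$.

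Writing $\gamma = k_i\mu - \omega'$ with $\omega' \in \Omega_1$, the nonzero condition becomes $\omega = \mu - \omega'$. After permuting rows (indexed by $(i, \omega')$) and columns (indexed by $(b, \omega)$) so that each row $(i, \omega')$ is adjacent to the column block $\{(b, \mu - \omega') \mid 1 \le b \le n\}$, the Jacobian of $\{p^{(i)}_\omega\}$ at $P_\xi$ becomes block-diagonal with $|\Omega_1|$ identical $r \times n$ blocks, each equal to $J_\xi := \bigl(\partial p^{(i)}/\partial x_b (\xi)\bigr)_{i, b}$. Hence the rank of the full Jacobian at $P_\xi$ is $|\Omega_1| \cdot \operatorname{rank}(J_\xi)$. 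By the Jacobian criterion, algebraic independence of $\{p^{(i)}\}$ in $S(\g)$ is equivalent to $\operatorname{rank}(J_\xi) = r$ for some (generic) $\xi$; choosing such a $\xi$ gives rank $r|\Omega_1|$ for the larger Jacobian, which is the maximum possible, and yields algebraic independence of $\{p^{(i)}_\omega\}$.

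The principal technical step is the block-diagonal identification of the Jacobian at $P_\xi$, which hinges on the $\omega$-independence of $\partial \tau_a(p)/\partial y_{b, \omega}$ together with the sharp collapse forced by evaluating at a point where only the ``top-$\Gamma$-degree'' coordinates are nonzero. I expect no further obstacles: once the Jacobian is computed, both implications follow immediately.
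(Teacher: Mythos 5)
Your proof is correct, but it takes a genuinely different route from the paper's. The paper proves the proposition by induction on the number of variables $\ell$: the base case $\ell=1$ is quoted from Ra\"is--Tauvel \cite[Lem.~3.3(ii)]{RT92} (via Remark~\ref{rem:pgamma-RT}), and the inductive step uses the factorization $A \cong B \otimes \F[t_\ell]/(t_\ell^{\mu_\ell+1})$ of Remark~\ref{rem:A-max-element-case} together with the compatibility $\left( p_{(\gamma_1,\dotsc,\gamma_{\ell-1})} \right)_{\gamma_\ell} = p_\gamma$ of Lemma~\ref{lem:double-invariant-poly}. You instead argue directly: the easy direction via the observation that $p_{k\mu} = \tau_\mu(p)$ (which is correct, since $\omega_1+\dotsb+\omega_k = k\mu$ with all $\omega_j \le \mu$ forces $\omega_j = \mu$ for all $j$, and $\tau_\mu$ is an injective algebra map), and the hard direction via the Jacobian criterion, evaluating at the point $P_\xi$ supported on the top-degree coordinates $y_{b,\mu}$, where the identity $\partial p_\gamma/\partial y_{b,\omega} = (\partial p/\partial x_b)_{\gamma-\omega}$ and homogeneity collapse the Jacobian into $\lvert\Omega_1\rvert$ copies of the block $\bigl(\partial p^{(i)}/\partial x_b(\xi)\bigr)$. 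I checked the block computation and the rank count $r\lvert\Omega_1\rvert$; both are right (note $\omega'\mapsto \mu-\omega'$ is a bijection of $\Omega_1$ precisely because $\mu$ is a greatest element, so the hypothesis is used in the same essential way as in the paper). What your approach buys is self-containedness and uniformity: it does not rely on the external one-variable result of \cite{RT92}, and in fact reproves it. What the paper's approach buys is brevity and the reuse of Lemma~\ref{lem:double-invariant-poly}, which is also structurally parallel to the inductive proofs of Propositions~\ref{prop:chi-regular}. Either argument is acceptable.
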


\begin{proof}
  We prove the result by induction on $\ell$.  If $\ell=1$, then, by Remark~\ref{rem:pgamma-RT}, the result is precisely \cite[Lem.~3.3(ii)]{RT92}.

  Now assume $\ell \ge 2$ and consider a collection of elements \eqref{eq:algind-g}.  Let $\nu = (\mu_1,\dotsc,\mu_{\ell-1}) \in \N^{\ell-1}$.  By Remark~\ref{rem:A-max-element-case}, we have
  \begin{equation} \label{eq:AB-reduction}
    A \cong B \otimes \F[t_\ell]/(t_\ell^{\mu_\ell+1}),\quad \text{where} \quad
    B = \F[t_1,\dotsc,t_{\ell-1}]/ \F \langle t^\omega \mid \omega \not \le \nu \rangle.
  \end{equation}
  By the inductive hypothesis, the collection \eqref{eq:algind-g} is algebraically independent if and only if the collection
  \begin{equation} \label{eq:pi-inductive-step}
    p^{(i)}_{k_i \nu - \lambda} \in S^{k_i}(\g \otimes B),\quad 1 \le i \le r,\ \lambda \in \N^{\ell-1},\ \lambda \le \nu,
  \end{equation}
  is algebraically independent.  Now, note that $\g \otimes B$ is a finite-dimensional Lie algebra.  Therefore, the $\ell=1$ case of the proposition implies that the collection \eqref{eq:pi-inductive-step} is algebraically independent if and only if
  \[
    \left( p^{(i)}_{k_i \nu - (\lambda_1,\dotsc,\lambda_{\ell-1})} \right)_{k_i \mu_\ell - \lambda_\ell} \in S^{k_i}(\g \otimes A),\quad 1 \le i \le r,\ \lambda \in \Omega_1,
  \]
  are algebraically independent.  By Lemma~\ref{lem:double-invariant-poly}, we have
  \[
    \left( p^{(i)}_{k_i \nu - (\lambda_1,\dotsc,\lambda_{\ell-1})} \right)_{k_i \mu_\ell - \lambda_\ell} = p^{(i)}_{k_i \mu - \lambda} \quad \text{ for all $i$ and $\lambda$},
  \]
  completing the proof of the inductive step.
\end{proof}

\begin{rem} \label{rem:generator-independent}
  It is clear from the definitions in Section~\ref{subsec:poly-defs} that, if $p \in S^k(\g \otimes A)$ is a product of the polynomials \eqref{eq:algind-g}, then, for $\omega \in \Omega_1$, $p_{k \mu - \omega}$ is a polynomial in the elements \eqref{eq:algind-gA}.  Thus, the subalgebra $T_A$ of $S(\g \otimes A)$ generated by the polynomials \eqref{eq:algind-gA} depends only on the subalgebra $T$ of $S(\g)$ generated by the polynomials \eqref{eq:algind-g}, and not on the individual polynomials themselves.
\end{rem}

%
\section{Regular elements and bilinear forms}
%

\subsection{Regular elements}

Suppose $\fa$ is a finite-dimensional Lie algebra.  For $f \in \fa^*$, we define
\[
  \fa^f = \{x \in \fa \mid x \cdot f = 0\} = \{x \in \fa \mid f([\fa,x])=0 \}.
\]
The \emph{index} of $\fa$ is defined to be
\[
  \chi(\fa) = \Inf \{\dim \fa^f \mid f \in \fa^*\}.
\]
We say that $f$ is \emph{regular} if $\dim \fa^f = \chi(\fa)$.

For an element $x \in \fa$, we let
\[
  \fa^x = \{y \in \fa \mid [x,y]=0\}
\]
be the centralizer of $x$ in $\fa$.  An element of $\fa$ is said to be \emph{regular} if its centralizer has minimal dimension.  (If $\fa^*$ is identified with $\fa$ via an invariant nondegenerate bilinear form, this corresponds to the notion of regular element of $\fa^*$ given above.)

\begin{prop} \label{prop:chi-regular}
  Let $\g$ be a finite-dimensional Lie algebra, $\mu \in \N^\ell$, and define
  \[
    A = \F[t_1,\dotsc,t_\ell] / \F \langle t^\omega \mid \omega \not \le \mu \rangle.
  \]
  \begin{enumerate}
    \item \label{prop-item:chi} We have $\chi(\g \otimes A) = \chi(\g) \dim A$.

    \item \label{prop-item:regular} A linear form $F = \sum_{\omega \le \mu} f_\omega \otimes \epsilon^\omega$ on $\g \otimes A$ is regular if and only if $f_\mu$ is a regular linear form on $\g$.
  \end{enumerate}
\end{prop}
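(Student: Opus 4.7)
The plan is to prove both parts simultaneously by induction on $\ell$, using the tensor decomposition of $A$ described in Remark~\ref{rem:A-max-element-case} to reduce to the truncated current case already handled by Ra\"is--Tauvel. The base case $\ell=1$ is exactly the statement of \cite[Th.~4.5]{RT92} (or the corresponding lemma there), which treats $A=\F[t]/(t^{\mu+1})$ for an arbitrary finite-dimensional Lie algebra and identifies both the index and the regular linear forms via the top $\epsilon^\mu$-component.

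For the inductive step, set $\nu=(\mu_1,\dotsc,\mu_{\ell-1})$, $C=\F[t_\ell]/(t_\ell^{\mu_\ell+1})$, and
\[
  B=\F[t_1,\dotsc,t_{\ell-1}]/\F\langle t^\omega \mid \omega \not\le \nu\rangle,
\]
so that $A\cong B\otimes C$ and the natural isomorphism $\g\otimes A\cong (\g\otimes B)\otimes C$ is an isomorphism of Lie algebras (with $\g\otimes B$ a finite-dimensional Lie algebra and $(\g\otimes B)\otimes C$ a truncated current algebra over it). Under the dual identification $(\g\otimes A)^*\cong ((\g\otimes B)\otimes C)^*$, I would write the given $F=\sum_{\omega\le\mu}f_\omega\otimes\epsilon^\omega$ in the form $F=\sum_{\beta\le\mu_\ell}F_\beta\otimes\epsilon^\beta_C$, where
\[
  F_\beta=\sum_{\lambda\le\nu} f_{(\lambda,\beta)}\otimes\epsilon^\lambda_B \;\in\;(\g\otimes B)^*.
\]
Applying the $\ell=1$ case to the Lie algebra $\g\otimes B$ and the algebra $C$ then yields $\chi((\g\otimes B)\otimes C)=\chi(\g\otimes B)\dim C$ and the equivalence ``$F$ regular on $\g\otimes A$ iff $F_{\mu_\ell}$ regular on $\g\otimes B$''. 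Applying the inductive hypothesis to $\g$ and $B$ gives $\chi(\g\otimes B)=\chi(\g)\dim B$ and the equivalence ``$F_{\mu_\ell}$ regular on $\g\otimes B$ iff its top component (at $\lambda=\nu$), which is $f_{(\nu,\mu_\ell)}=f_\mu$, is regular on $\g$''. Chaining the two equivalences and the two multiplicativity statements yields \eqref{prop-item:chi} and \eqref{prop-item:regular}, since $\dim A=\dim B\cdot\dim C$.

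The main obstacle is bookkeeping: one needs to verify that the tensor decomposition $A\cong B\otimes C$ genuinely carries the monomial basis $\{\tau^\omega\}_{\omega\le\mu}$ to $\{\tau^\lambda_B\otimes\tau^\beta_C\}_{\lambda\le\nu,\,\beta\le\mu_\ell}$ with matching dual bases, and that under the resulting Lie algebra isomorphism $\g\otimes A\cong(\g\otimes B)\otimes C$ the ``top component'' notions used at each inductive stage compose correctly to pick out $f_\mu$. A secondary point to check is that the statement of \cite[\S3]{RT92} we invoke as the base case indeed holds for an arbitrary (not necessarily semisimple) finite-dimensional Lie algebra over an arbitrary field of characteristic zero, since we plan to apply it to the non-reductive algebra $\g\otimes B$. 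Once these identifications are made explicit, the inductive argument is purely formal.
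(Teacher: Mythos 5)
Your proposal is correct and follows essentially the same route as the paper: induction on $\ell$, splitting $A\cong B\otimes \F[t_\ell]/(t_\ell^{\mu_\ell+1})$ via Remark~\ref{rem:A-max-element-case}, applying the $\ell=1$ case to the finite-dimensional Lie algebra $\g\otimes B$, and chaining with the inductive hypothesis to extract $f_\mu$. The only correction is the base-case reference: the relevant result is \cite[Th.~2.8]{RT92} (not Th.~4.5), which is stated for an arbitrary finite-dimensional Lie algebra, so the concern you raise about applying it to the non-reductive $\g\otimes B$ is indeed resolved.
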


\begin{proof}
  We prove the result by induction on $\ell$.  If $\ell=1$, the theorem is precisely \cite[Th.~2.8]{RT92}.  Now assume $\ell \ge 2$, and let $\nu = (\mu_1,\dotsc,\mu_{\ell-1}) \in \N^{\ell-1}$.  As in the proof of Proposition~\ref{prop:poly-alg-ind}, we have the isomorphism \eqref{eq:AB-reduction}.  By the inductive hypothesis, we have
  \[
    \chi(\g \otimes A) =\chi(\g \otimes B)(\mu_\ell + 1) = \chi(\g) (\dim B) (\mu_\ell + 1) = \chi(\g) \dim A,
  \]
  completing the inductive step in the proof of part~\eqref{prop-item:chi}.

  We now prove part~\eqref{prop-item:regular}.  Using the isomorphism \eqref{eq:AB-reduction} to identify $A$ with $B \otimes \F[t_\ell]/(t_\ell^{\mu_\ell +1})$, we write
  \[
    F = \sum_{\omega \le \mu} \left( f_\omega \otimes \epsilon^{(\omega_1,\dotsc,\omega_{\ell-1})} \right) \otimes \epsilon_\ell^{\omega_\ell},
  \]
  where $\epsilon_\ell^n$, $0 \le n \le \mu_\ell$ is the dual basis to the basis $\tau_\ell^n$, $0 \le n \le \mu_\ell$ of $\F[t_\ell]/(t_\ell^{\mu_\ell+1})$.  Then, by the $\ell=1$ case of the theorem, the linear form $F$ on $\g \otimes A$ is regular if and only if
  \[
    \sum_{\omega \le \nu} f_{(\omega_1,\dotsc,\omega_{\ell-1},\mu_\ell)} \otimes \epsilon^{(\omega_1,\dotsc,\omega_{\ell-1})}
  \]
  is a regular form on $\g \otimes B$.  By the inductive hypothesis, this form is regular if and only if $f_\mu$ is a regular form on $\g$.  This completes the proof of the inductive step.
\end{proof}

\subsection{Bilinear forms}

In this subsection, we assume that $\Omega_1$ has a greatest element $\mu$.  Suppose
\[
  \kappa \colon \g \times \g \to \F
\]
is a bilinear form on $\g$.  We extend $\kappa$ to a bilinear map
\begin{equation} \label{eq:kappa-Omega-def}
  \kappa^\Omega \colon \left( \g \otimes \F[\Omega] \right) \times \left( \g \otimes \F[\Omega] \right) \to \F[\Omega],\quad
  \kappa^\Omega (x \otimes q^\omega, y \otimes q^\gamma) = \kappa(x,y) q^{\omega + \gamma},
\end{equation}
extended by bilinearity.  For any $\omega \in \Omega$, we can compose $\kappa^\Omega$ with the projection onto the $q^\omega$-component to obtain a bilinear form $\kappa^\Omega_\omega$.  More precisely, we have
\begin{equation} \label{eq:kappa-Omega-decomp}
  \kappa^\Omega = \sum_{\omega \in \Omega} \kappa_\omega^\Omega q^\omega,
\end{equation}
where, for $\omega \in \Omega$, we have
\[
  \kappa_\omega^\Omega \colon (\g \otimes \F[\Omega]) \times (\g \otimes \F[\Omega]) \to \F,\quad
  \kappa_\omega^\Omega \left( \sum_{\nu \in \Omega} x_\nu \otimes q^\nu, \sum_{\nu \in \Omega} y_\nu \otimes q^\nu \right) = \sum_{\nu + \nu' = \omega} \kappa(x_\nu,y_{\nu'}).
\]
Using the injection $\imath$ of \eqref{eq:imath-def}, we have the corresponding bilinear forms on $\g \otimes A$.  Precisely, for $\omega \in \Omega$, we have
\[
  \kappa_\omega \colon (\g \otimes A) \times (\g \otimes A) \to \F,\quad
  \kappa_\omega \left( \sum_{\nu \in \Omega_1} x_\nu \otimes \tau^\nu, \sum_{\nu \in \Omega_1} y_\nu \otimes \tau^\nu \right) = \sum_{\nu + \nu' = \omega} \kappa(x_\nu,y_{\nu'}).
\]

\begin{lem} \label{lem:bilinear-form-properties}
  The bilinear forms defined above have the following properties.
  \begin{enumerate}
    \item \label{lem-item:form-symmetric} The forms $\kappa^\Omega_\omega$ are symmetric for all $\omega \in \Omega$ if and only if $\kappa$ is symmetric.
    \item \label{lem-item:form-invariant} The forms $\kappa^\Omega_\omega$ are invariant for all $\omega \in \Omega$ if and only if $\kappa$ is invariant.
    \item \label{lem-item:form-nondegen} The form $\kappa$ is nondegenerate if and only if $\kappa_\mu$ is nondegenerate.
  \end{enumerate}
\end{lem}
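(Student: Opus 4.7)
The plan is to treat parts~\eqref{lem-item:form-symmetric} and~\eqref{lem-item:form-invariant} by direct inspection of the defining formula~\eqref{eq:kappa-Omega-def}, and to reduce part~\eqref{lem-item:form-nondegen} to a ``block anti-diagonal'' statement about how $\kappa_\mu$ pairs the homogeneous subspaces $\g \otimes \tau^\nu$ of $\g \otimes A$.

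For part~\eqref{lem-item:form-symmetric}, the definition gives $\kappa^\Omega_\omega(x \otimes q^\nu, y \otimes q^{\nu'}) = \delta_{\nu+\nu',\omega}\,\kappa(x,y)$, so symmetry of $\kappa$ propagates to each $\kappa^\Omega_\omega$ term by term, and conversely applying symmetry of $\kappa^\Omega_0$ to the pair $(x \otimes q^0, y \otimes q^0)$ recovers $\kappa(x,y) = \kappa(y,x)$. For part~\eqref{lem-item:form-invariant}, I would use the fact that the Lie bracket on $\g \otimes \F[\Omega]$ is $[x \otimes q^\omega, y \otimes q^{\omega'}] = [x,y] \otimes q^{\omega+\omega'}$, so both sides of the invariance identity $\kappa^\Omega([X,Y],Z) = \kappa^\Omega(X,[Y,Z])$ evaluated on monomials reduce to $q^{\omega+\omega'+\omega''}$ times $\kappa([x,y],z)$ and $\kappa(x,[y,z])$ respectively; the equivalence with invariance of $\kappa$ then follows, with the converse direction coming from restriction to $\g \otimes q^0$.

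The main content lies in part~\eqref{lem-item:form-nondegen}. The key observation is that the defining formula makes $\kappa_\mu$ ``anti-diagonal'' with respect to the decomposition $\g \otimes A = \bigoplus_{\nu \in \Omega_1} \g \otimes \tau^\nu$: the subspace $\g \otimes \tau^\nu$ pairs under $\kappa_\mu$ only with $\g \otimes \tau^{\mu-\nu}$, and on each such block the pairing is literally $\kappa$. For the implication ``$\kappa$ nondegenerate $\Rightarrow$ $\kappa_\mu$ nondegenerate,'' given $0 \ne X = \sum_\nu x_\nu \otimes \tau^\nu$, I would pick any $\nu \in \Omega_1$ with $x_\nu \ne 0$, choose $y \in \g$ with $\kappa(x_\nu,y) \ne 0$ (possible by nondegeneracy of $\kappa$), and check that $\kappa_\mu(X, y \otimes \tau^{\mu-\nu}) = \kappa(x_\nu, y) \ne 0$. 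The converse is easier: any $0 \ne x$ in the radical of $\kappa$ yields $x \otimes \tau^0$ in the radical of $\kappa_\mu$, since the only contributing term equals $\kappa(x, y_\mu) = 0$.

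No step presents a real obstacle, but one point warrants attention in part~\eqref{lem-item:form-nondegen}: the construction of the test vector $y \otimes \tau^{\mu-\nu}$ requires $\mu - \nu \in \Omega_1$, which holds precisely because $\mu$ is the greatest element of $\Omega_1$ (the standing assumption of this subsection). This is also the conceptual reason why $\kappa_\mu$ rather than a generic $\kappa_\omega$ is the correct homogeneous component to single out for nondegeneracy.
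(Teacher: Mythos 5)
Your proposal is correct and follows essentially the same route as the paper: parts (a) and (b) by direct computation on monomials with the converses obtained by restricting to $\g \otimes q^0 \cong \g$, and part (c) via the block anti-diagonal pairing, testing a nonzero $X$ against $y \otimes \tau^{\mu-\nu}$ (using $\mu - \nu \in \Omega_1$ exactly as the paper does) and handling the converse by the same $\kappa_\mu(x \otimes \tau^0, y \otimes \tau^\mu) = \kappa(x,y)$ computation, merely phrased contrapositively.
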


\begin{proof}
  Part~\eqref{lem-item:form-symmetric} follows immediately from \eqref{eq:kappa-Omega-decomp} and \eqref{eq:kappa-Omega-def}.
  \details{
    Suppose $\kappa$ is symmetric.  Then $\kappa^\Omega$ is symmetric by \eqref{eq:kappa-Omega-def}.  Hence the $\kappa^\Omega_\omega$ are symmetric by \eqref{eq:kappa-Omega-decomp}.  Now suppose the $\kappa^\Omega_\omega$ are symmetric.  Then $\kappa^\Omega$ is symmetric by \eqref{eq:kappa-Omega-decomp}.  Since $\kappa$ is simply the restriction of $\kappa^\Omega$ to $\g \subseteq \g \otimes \F[\Omega]$, it follows that $\kappa$ is symmetric.
  }

  To prove part~\eqref{lem-item:form-invariant}, suppose $\kappa$ is invariant.  Then, for all $x,y,z \in \g$ and $\omega, \nu, \lambda \in \Omega$, we have
  \[
    \kappa^\Omega([x \otimes q^\omega,y \otimes q^\nu],z \otimes q^\lambda) - \kappa^\Omega(x \otimes q^\omega, [y \otimes q^\nu, z \otimes q^\lambda])
    = \big( \kappa([x,y],z) - \kappa(x,[y,z]) \big) q^{\omega+\nu+\lambda}
    = 0.
  \]
  Thus, $\kappa^\Omega$ is invariant.  Conversely, suppose $\kappa^\Omega$ is invariant.  Then, since $\kappa$ is simply the restriction of $\kappa^\Omega$ to $\g$, it follows that $\kappa$ is invariant.  The fact that the forms $\kappa^\Omega_\omega$, $\omega \in \Omega$, are invariant if and only if $\kappa^\Omega$ is invariant follows immediately from \eqref{eq:kappa-Omega-decomp}.  This completes the proof of part~\eqref{lem-item:form-invariant}.

  To prove part~\eqref{lem-item:form-nondegen}, assume $\kappa$ is nondegenerate and fix an arbitrary nonzero element $\sum_{\omega \in \Omega_1} x_\omega \otimes \tau^\omega \in \g \otimes A$.  Choose $\nu$ such that $x_\nu \ne 0$.  Since $\kappa$ is nondegenerate, there exists $y \in \g$ such that $\kappa(x_\nu,y) \ne 0$.  Since $\mu$ is a greatest element, we have $\mu - \nu \in \Omega_1$, and
  \[
    \kappa_\mu \left( \sum_{\omega \in \Omega_1} x_\omega \otimes \tau^\omega, y \otimes \tau^{\mu-\nu} \right)
    = \kappa(x_\nu,y) \ne 0.
  \]
  Thus $\kappa_\mu$ is nondegenerate.  Conversely, assume $\kappa_\mu$ is nondegenerate and fix an arbitrary nonzero element $x \in \g$.  Then there exists $y \otimes \tau^\mu \in \g \otimes A$ such that
  \[
    \kappa(x,y) = \kappa_\mu(x,y \otimes \tau^\mu) \ne 0.
  \]
  So $\kappa$ is nondegenerate.
\end{proof}

Recall that a quadratic Lie algebra is a Lie algebra $\g$, together with a symmetric, nondegenerate, invariant, bilinear form.

\begin{cor}
  If $\g$ is a quadratic Lie algebra, then so is $\g \otimes A$.
\end{cor}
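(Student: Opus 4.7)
The plan is to exhibit an explicit symmetric, nondegenerate, invariant bilinear form on $\g \otimes A$ built directly from the given form on $\g$, and then invoke the three parts of Lemma~\ref{lem:bilinear-form-properties} to verify the three required properties. The natural candidate is the form $\kappa_\mu$ introduced right before Lemma~\ref{lem:bilinear-form-properties}, associated to the greatest element $\mu$ of $\Omega_1$.

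More precisely, let $\kappa \colon \g \times \g \to \F$ be the given symmetric, nondegenerate, invariant, bilinear form making $\g$ into a quadratic Lie algebra. Form $\kappa^\Omega$ on $\g \otimes \F[\Omega]$ as in \eqref{eq:kappa-Omega-def}, decompose it via \eqref{eq:kappa-Omega-decomp}, and restrict via $\imath$ to obtain the form $\kappa_\mu$ on $\g \otimes A$. Then part~\eqref{lem-item:form-symmetric} of Lemma~\ref{lem:bilinear-form-properties} yields symmetry of $\kappa_\mu$, part~\eqref{lem-item:form-invariant} yields invariance of $\kappa_\mu$, and part~\eqref{lem-item:form-nondegen} yields nondegeneracy of $\kappa_\mu$ (this last part being the only one where the hypothesis that $\Omega_1$ has a greatest element is genuinely used, via the fact that $\mu - \nu \in \Omega_1$ for every $\nu \in \Omega_1$).

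There is no real obstacle here since all the work has been done in Lemma~\ref{lem:bilinear-form-properties}; the proof amounts to assembling its three conclusions. In particular, no induction on $\ell$ or additional combinatorial argument is needed — the quadratic structure on $\g \otimes A$ is transported from $\g$ through a single well-chosen graded component of $\kappa^\Omega$.
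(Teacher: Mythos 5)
Your proposal is correct and is essentially identical to the paper's proof, which likewise observes that Lemma~\ref{lem:bilinear-form-properties} immediately gives that $\kappa_\mu$ is symmetric, nondegenerate, and invariant on $\g \otimes A$. No further comment is needed.
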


\begin{proof}
  It follows immediately from Lemma~\ref{lem:bilinear-form-properties} that if $\kappa$ is a symmetric, nondegenerate, invariant, bilinear form on $\g$, then $\kappa_\mu$ is a symmetric, nondegenerate, invariant, bilinear form on $\g \otimes A$.
\end{proof}

\begin{rem} \label{rem:pairing-identification}
  Suppose $\kappa$ is a symmetric, nondegenerate, invariant, bilinear form on $\g$.  Via the form $\kappa_\mu$, we can identify elements of $\g \otimes A_\omega$ with elements of $(\g \otimes A)^*_{\mu - \omega}$, for all $\omega \in \Omega_1$.  In particular, we have the following.
  \begin{enumerate}
    \item \label{rem-item:regular} By Proposition~\ref{prop:chi-regular}\eqref{prop-item:regular}, an element $\sum_{\omega \in \Omega_1} x_\omega \otimes \tau^\omega$ of $\g \otimes A$ is regular if and only if $x_0$ is a regular element of $\g$.
        \details{
          Let $X = \sum_{\omega \in \Omega_1} x_\omega \otimes \tau^\omega$ and $F = \sum_{\omega \in \Omega_1} f_\omega \otimes \epsilon^\omega \in (\mathfrak g \otimes A)^*$ denote the linear form $\kappa_\mu (X , -)$.  Since $\kappa$ is bilinear, we have $f_\omega = \kappa (x_{\mu-\omega}, -)$ for all $\omega \in \Omega_1$.  Moreover,  since $\kappa_\mu$ is nondegenerate and invariant, we have $(\mathfrak g \otimes A)^F = (\mathfrak g \otimes A)^X$.  By Proposition~\ref{prop:chi-regular}\eqref{prop-item:regular}, $F$ is regular; that is, $(\mathfrak g \otimes A)^F$ has dimension $\chi(\mathfrak g \otimes A)$, if and only if $f_\mu = \kappa(x_0, -)$ is regular.  Since $\kappa$ is also nondegenerate and invariant, we have $\mathfrak g^{f_\mu} = \mathfrak g^{x_0}$.  Thus $f_\mu$ is regular if and only if $x_0$ is regular.
        }

    \item \label{rem-item:polys} Suppose $p^{(j)} \in S^{k_j}(\g^*)^\g$ for $j \in \{1,\dotsc,r\}$.  Then, by Proposition~\ref{prop:pgamma-invariant} and Lemma~\ref{lem:index-set-max-element}, the polynomials $p^{(j)}_\omega \in S^{k_j} \big( (\g \otimes A)^* \big)$, $\omega \in \Omega_1$, $j \in \{1,\dotsc,r\}$, lie in $S \big( (\g \otimes A)^* \big)^{\g \otimes A}$.  By Proposition~\ref{prop:poly-alg-ind}, the $p^{(j)}_\omega$ are algebraically independent if and only if the $p^{(j)}$ are algebraically independent.
  \end{enumerate}
\end{rem}

%
\section{The semisimple case}
%

In this section we assume $\g$ is semisimple and $\F$ is algebraically closed (still of characteristic zero). Recall that a \emph{regular} element of $\g$ is an element whose centralizer has the minimal dimension $\chi(\g)$.  An $\fsl_2$-triple is said to be \emph{principal} if its elements are regular.  Fix a principal $\fsl_2$-triple $(x_+,x_-,h)$ of $\g$.  (For a proof that principal $\fsl_2$-triples always exists, see, for instance, \cite[Ch.~VIII, \S11, no.~4, Prop.~8]{Bou05}.)  We let
\[
  \ft_0 = x_+ + \g^{x_-}
\]
be the associated transversal slice.  We collect some important well-known properties of this slice in the following lemma.

\begin{lem} \label{lem:slice-properties}
  The transversal slice defined above has the following properties.
  \begin{enumerate}
    \item \label{lem-item:slice-regular} Every element of $\ft_0$ is a regular element of $\g$.

    \item \label{lem-item:orbits-meet-slice} The orbit under the adjoint action of every regular element of $\g$ intersects $\ft_0$ at a unique point, and this intersection is transversal.  That is, we have
      \[
        \g = \g^{x_-} \oplus [\g,x] \quad \text{for all } x \in \ft_0.
      \]

    \item \label{lem-item:restriction-isom} Let $R_0 \colon S(\g^*) \to S(\ft_0^*) = \F[\ft_0]$ be the operation of restriction of polynomial functions on $\g$ to polynomial functions on $\ft_0$.  Then the restriction of $R_0$ to invariant polynomials yields an isomorphism of associative algebras $S(\g^*)^\g \cong \F[\ft_0]$.
  \end{enumerate}
\end{lem}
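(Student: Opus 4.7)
The statement collects three classical facts of Kostant about the Kostant section associated to a principal $\fsl_2$-triple, so my proof plan is to reproduce his arguments in the setting of this paper; I would indeed cite \cite{Kos63} for the final write-up, but here is the shape of the argument. The unifying device is the $\ad h$-weight decomposition $\g = \bigoplus_{k \in \Z} \g_k$, under which $(x_+,x_-,h)$ acts as an $\fsl_2$-triple, all irreducible summands of $\g$ being odd or even strings symmetric about $0$. From $\fsl_2$-representation theory one extracts two cornerstones: $\dim \g^{x_+} = \dim \g^{x_-} = \chi(\g) =: r$ (the number of irreducible $\fsl_2$-components, which equals the rank by principality), and the direct sum decomposition $\g = [x_+, \g] \oplus \g^{x_-}$ (lowest-weight vectors give a linear complement to the image of $\ad x_+$ in each irreducible summand).

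For part~\eqref{lem-item:orbits-meet-slice}, the key step is to promote the tangent-space identity at $x_+$ to every point of $\ft_0$. Writing $x = x_+ + y$ with $y \in \g^{x_-} \subseteq \bigoplus_{k \le 0} \g_k$, I would filter $\g$ by $F^{\ge k}\g = \bigoplus_{j \ge k}\g_j$ and observe that $\ad x = \ad x_+ + \ad y$, with $\ad y$ strictly lowering the grading compared to $\ad x_+$. A standard filtration/leading-term argument then shows that $\g^{x_-} \cap [\g,x] = 0$: any purported common vector, written in its lowest $\ad h$-component, would force a lowest-weight vector to lie in the image of $\ad x_+$, contradicting the decomposition above. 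A dimension count gives $\g = \g^{x_-} \oplus [\g,x]$, which is exactly the transversality. For the uniqueness of the intersection $G\cdot x \cap \ft_0$ (where $G$ is the adjoint group, which we may invoke since $\F$ is algebraically closed), I would use a sweeping argument: consider the morphism $G \times \ft_0 \to \g$, $(g,x)\mapsto \mathrm{Ad}(g)x$; the transversality already established shows this map is smooth and surjective onto the regular locus, and a dimension comparison with $G/Z(\g)$ forces the fibre through $(1,x)$ to be a single point.

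For part~\eqref{lem-item:slice-regular}, I would argue as follows: transversality already gives $\dim [\g,x] = \dim \g - \dim \g^{x_-} = \dim \g - r$, so $\dim \g^x = r = \chi(\g)$, proving every point of $\ft_0$ is regular. Finally, for part~\eqref{lem-item:restriction-isom}, the restriction map $R_0$ restricted to $S(\g^*)^\g$ is \emph{injective} because two invariants agreeing on $\ft_0$ agree on the regular orbits (by \eqref{lem-item:orbits-meet-slice}), hence on the Zariski-dense regular locus, hence everywhere. For \emph{surjectivity} one invokes the harder half of Kostant's theorem: $S(\g^*)^\g$ is a polynomial algebra on $r$ homogeneous generators $p^{(1)},\dotsc,p^{(r)}$ (Chevalley), the nilpotent cone is a complete intersection cut out by these generators, and a graded dimension count together with the fact that the $R_0(p^{(j)})$ are algebraically independent on $\ft_0$ forces $R_0 : S(\g^*)^\g \to \F[\ft_0]$ to be an isomorphism of graded algebras.

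The main obstacle is not any single step but the cumulative machinery required, particularly the surjectivity in~\eqref{lem-item:restriction-isom}, which genuinely depends on Kostant's theorem that the positive-degree invariants generate the ideal of the nilpotent cone. Since the paper only uses $\ft_0$ as a technical device to feed into the multicurrent construction in the next section, the most efficient route is simply to attribute parts~\eqref{lem-item:slice-regular}--\eqref{lem-item:restriction-isom} to \cite[Thms.~0.1, 0.7, 0.10]{Kos63} and proceed.
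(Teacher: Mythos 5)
Your proposal lands in the same place as the paper: the paper's entire proof is a citation to Kostant (\cite[Lem.~10, Lem.~12, Th.~7, Th.~8]{Kos63}, together with the remark that his arguments, written over $\C$, go through over any algebraically closed field of characteristic zero), which is exactly the route you recommend at the end, and the arguments you sketch along the way are the standard ones behind those results. The only soft spot in your sketch is the uniqueness claim in part~\eqref{lem-item:orbits-meet-slice}: a dimension comparison for $G \times \ft_0 \to \g$ only yields finite fibres, and pinning down a \emph{single} intersection point genuinely needs an extra input (e.g.\ the contracting $\mathbb{G}_m$-action on $\ft_0$, or the fact that the restricted invariants separate points of $\ft_0$) --- but since you, like the paper, ultimately defer to \cite{Kos63} for this, it is not a gap in the proof as proposed.
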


\begin{proof}
  A proof of part~\eqref{lem-item:slice-regular} can be found in \cite[Lem.~10 and Lem.~12]{Kos63}.  Part~\eqref{lem-item:orbits-meet-slice} follows from \cite[Th.~8]{Kos63}.  Part~\eqref{lem-item:restriction-isom} is \cite[Th.~7]{Kos63}.  Although the paper \cite{Kos63} works over the field of complex numbers, the proofs are valid over arbitrary algebraically closed fields of characteristic zero.
\end{proof}

It is clear that $\g^{x_-} \otimes A$ is the centralizer of $x_- \otimes 1$ in $\g \otimes A$.  We define the affine space
\[
  \ft = x_+ \otimes 1 + \g^{x_-} \otimes A \subseteq \g \otimes A.
\]
By Lemma~\ref{lem:slice-properties}\eqref{lem-item:slice-regular} and Remark~\ref{rem:pairing-identification}\eqref{rem-item:regular}, $\ft$ consists of regular elements of $\g \otimes A$.  We let
\[
  R \colon S \big( (\g \otimes A)^* \big) \to S(\ft^*) = \F[\ft]
\]
denote the restriction of polynomial functions on $\g \otimes A$ to polynomial functions on $\ft$.  The following result is a generalization of \cite[Lem.~4.2]{RT92}.

\begin{prop} \label{prop:R-injection}
  \begin{enumerate}
    \item \label{prop-item:adjoint-orbits-meet-t} Let $X$ be a regular element of $\g \otimes A$.  Then the orbit of $X$ under the adjoint action meets $\ft$.

    \item \label{prop-item:R-injection} The restriction of $R$ to $S \big( (\g \otimes A)^* \big)^{\g \otimes A}$ yields an injection of $S \big( (\g \otimes A)^* \big)^{\g \otimes A}$ into $\F[\ft]$.
  \end{enumerate}
\end{prop}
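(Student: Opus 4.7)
The plan is to prove part~\eqref{prop-item:adjoint-orbits-meet-t} by an inductive reduction on the components of a regular element, and then to deduce part~\eqref{prop-item:R-injection} by a density argument.

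For part~\eqref{prop-item:adjoint-orbits-meet-t}, let $X = \sum_{\omega \in \Omega_1} x_\omega \otimes \tau^\omega$ be a regular element of $\g \otimes A$.  By Remark~\ref{rem:pairing-identification}\eqref{rem-item:regular}, $x_0$ is a regular element of $\g$, so Lemma~\ref{lem:slice-properties}\eqref{lem-item:orbits-meet-slice} allows me to conjugate $X$ (via the diagonal action of the adjoint group of $\g$ on $\g \otimes A$) so that $x_0 \in \ft_0 = x_+ + \g^{x_-}$.  Once this is done, I iteratively correct the remaining components $x_\omega$, $\omega > 0$, to lie in $\g^{x_-}$ using automorphisms of the form $\exp(\ad(z \otimes \tau^\nu))$ with $\nu > 0$ and $z \in \g$.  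These are well defined because $\Omega_1$ is finite and $\nu > 0$, so $\tau^{N\nu} = 0$ in $A$ for all sufficiently large $N$, making $\ad(z \otimes \tau^\nu)$ nilpotent.

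To make the induction run cleanly, I fix a total order $\preceq$ on $\Omega_1 \setminus \{0\}$ that refines the partial order $\le$, and process the elements in that order.  At step $\nu$, the operator $\exp(\ad(z \otimes \tau^\nu))$ fixes every component at a degree $\lambda \prec \nu$: the term $\tfrac{1}{k!}\ad(z \otimes \tau^\nu)^k$ shifts degrees by $k\nu$, and a contribution to degree $\lambda$ with $k \ge 1$ would require a starting degree $\lambda - k\nu \in \Omega$, forcing $\lambda \ge \nu$ in the partial order and hence $\nu \preceq \lambda$, a contradiction.  The degree-$\nu$ component is replaced by $x_\nu + [z, x_0]$, since only the $k=0$ and $k=1$ terms can land in degree $\nu$ (the $k \ge 2$ terms live in degrees $\ge 2\nu$).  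Because $x_0 \in \ft_0$, Lemma~\ref{lem:slice-properties}\eqref{lem-item:orbits-meet-slice} gives the splitting $\g = \g^{x_-} \oplus [\g, x_0]$, so I may select $z$ with $x_\nu + [z, x_0] \in \g^{x_-}$.  After every element of $\Omega_1 \setminus \{0\}$ has been treated, the final element of the orbit of $X$ lies in $\ft$.

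For part~\eqref{prop-item:R-injection}, suppose $p \in S\big((\g \otimes A)^*\big)^{\g \otimes A}$ satisfies $R(p) = 0$, i.e.\ $p$ vanishes on $\ft$.  Since $\F$ has characteristic zero, Lie-algebra invariance of $p$ under $\g \otimes A$ upgrades to invariance under the group generated by the diagonal action of the adjoint group of $\g$ together with the exponentials $\exp(\ad Y)$ for $Y \in \g \otimes A_{>0}$ (such exponentials are polynomial expressions in nilpotent ad-operators, for which Lie algebra annihilation and group invariance coincide).  By part~\eqref{prop-item:adjoint-orbits-meet-t}, the orbit of any regular element of $\g \otimes A$ under this group meets $\ft$, so $p$ vanishes on the regular locus.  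By Remark~\ref{rem:pairing-identification}\eqref{rem-item:regular}, that locus equals $\{X : x_0 \text{ is regular in } \g\}$, a nonempty Zariski-open (hence dense) subset of $\g \otimes A$; therefore $p = 0$.

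The main obstacle is the bookkeeping in the inductive step of part~\eqref{prop-item:adjoint-orbits-meet-t}: I must choose the total order $\preceq$ so that the exponential used at each step cannot disturb previously corrected components, and verify that the splitting $\g = \g^{x_-} \oplus [\g, x_0]$ from Lemma~\ref{lem:slice-properties}\eqref{lem-item:orbits-meet-slice} remains available at every stage, which relies on $x_0 \in \ft_0$ being preserved throughout the entire iteration.
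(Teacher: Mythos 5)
Your proposal is correct and follows essentially the same route as the paper: conjugate so that $x_0 \in \ft_0$, then inductively clean up the higher components with $\exp(\ad(z \otimes \tau^\nu))$ using the splitting $\g = \g^{x_-} \oplus [\g,x_0]$, the only cosmetic difference being that you organize the induction by a total order refining $\le$ where the paper takes a minimal element of the complement of a downward-closed set. Your density argument for part (b) just makes explicit what the paper dismisses as "clearly follows," and it is sound.
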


\begin{proof}
  Part~\eqref{prop-item:R-injection} clearly follows from part~\eqref{prop-item:adjoint-orbits-meet-t}.  To prove part~\eqref{prop-item:adjoint-orbits-meet-t}, let $X = \sum_{\omega \in \Omega_1} x_\omega \otimes \tau^\omega$ be a regular element of $\g \otimes A$.  By Remark~\ref{rem:pairing-identification}\eqref{rem-item:regular}, $x_0$ is a regular element of $\g$.  By Lemma~\ref{lem:slice-properties}\eqref{lem-item:orbits-meet-slice}, there exists an element $g$ of the adjoint group (i.e.\ group of inner automorphisms) of $\g$ such that $g(x_0) \in \ft_0$.  Since the linear map $g \otimes \id_A$ is an element of the adjoint group of $\g \otimes A$, we may assume without loss of generality that $x_0 \in \ft_0$.  Therefore, by Lemma~\ref{lem:slice-properties}\eqref{lem-item:orbits-meet-slice}, we have
  \begin{equation} \label{eq:R-injection-g-decomp}
    \g = \g^{x_-} \oplus [\g,x_0].
  \end{equation}

  To complete the proof, we want to show that, perhaps after acting by some element in the adjoint group, we have $x_\omega \in \g^{x_-}$ for all $\omega \in \Omega_1 \setminus \{0\}$.  Let $\Omega_1'$ be a subset of $\Omega_1 \setminus \{0\}$ such that
  \begin{gather}
    x_\omega \in \g^{x_-} \quad \text{for all } \omega \in \Omega_1',\quad \text{and} \label{orbit-cond1} \\
    \omega \in \Omega_1',\ 0 < \lambda \le \omega \implies \lambda \in \Omega_1'. \label{orbit-cond2}
  \end{gather}
  If $\Omega_1' = \Omega_1 \setminus \{0\}$, we are done.  Therefore, assume $\Omega_1' \ne \Omega_1 \setminus \{0\}$, and let $\nu$ be a minimal element of $\Omega_1 \setminus \big( \Omega_1' \cup \{0\} \big)$.  By \eqref{eq:R-injection-g-decomp}, there exists $z \in \g$ such that $x_\nu + [z,x_0] \in \g^{x_-}$.  Then we have
  \begin{align*}
    X' := \exp \big( \ad (z \otimes \tau^\nu) \big) (X)
    &= \sum_{\omega \not \ge \nu} x_\omega \otimes \tau^\omega + \big( x_\nu + [z,x_0] \big) \otimes \tau^\nu \mod \g \otimes A_{> \nu} \\
    &= \sum_{\omega \not \ge \nu} x_\omega \otimes \tau^\omega + x_\nu' \otimes \tau^\nu \mod \g \otimes A_{> \nu},
  \end{align*}
  for some $x_\nu' \in \g^{x_-}$.  If $X' = \sum_{\omega \in \Omega_1} x_\omega' \otimes \tau^\omega$, then $x'_\omega = x_\omega$ for all $\omega \not \ge \nu$ (in particular, for all $\omega \in \Omega_1'$), and so $\Omega_1' \cup \{\nu\}$ satisfies \eqref{orbit-cond1} and \eqref{orbit-cond2}.  Since $\Omega_1$ is finite, part~\eqref{prop-item:adjoint-orbits-meet-t} of the proposition follows by induction on the size of $\Omega_1'$ (the base case being $\Omega_1' = \varnothing$).
\end{proof}

For the remainder of this section, we assume that
\begin{center}
  $\Omega_1$ has a greatest element $\mu$.
\end{center}

By \cite[Th.~6]{Kos63} we can choose a basis $(u_1,\dotsc,u_r)$ of $\g^{x_-}$ and a system $(p^{(1)},\dotsc,p^{(r)})$ of homogeneous, algebraically independent generators of $S(\g^*)^\g$ such that $p^{(j)} \in S^{k_j}(\g^*)^\g$, $k_j \in \N_+$, and
\begin{equation}
  p^{(j)} \left( x_+ + \sum_{i=1}^r c_i u_i \right) = c_j,\quad \text{for all } c_1,\dotsc,c_r \in \F.
\end{equation}
\details{
  For the notation used in \cite[Th.~6]{Kos63}, see \cite[Th.~5]{Kos63} and the beginning of \cite[\S4.6]{Kos63}.
}
In other words
\[
  p^{(j)}(x) = u_j^*(x - x_+) \quad \text{for all } x \in \ft_0,
\]
where $(u_j^*)_j$ is the dual basis to $(u_j)_j$.

Consider the basis of $\g^{x_-} \otimes A$
\[
  (U_{i,\omega})_{i \in \{1,\dotsc,r\},\, \omega \in \Omega_1},\quad U_{i,\omega} = u_i \otimes \tau^\omega,
\]
and let $(U_{i,\omega}^*)$ denote the corresponding dual basis.  Then we have a system of coordinates $(\varepsilon_{i,\omega})_{i \in \{1,\dotsc,r\},\, \omega \in \Omega_1}$ on $\ft$ given by
\begin{equation} \label{eq:varepsilon-U*}
  \varepsilon_{i,\omega}(X) = U_{i,\omega}^* (X - x_+ \otimes 1),\quad i \in \{1,\dotsc,r\},\ \omega \in \Omega_1.
\end{equation}
Thus, we have
\[
  X = x_+ \otimes 1 + \sum_{\omega \in \Omega_1} \sum_{i=1}^r \varepsilon_{i,\omega}(X) u_i \otimes \tau^\omega,\quad \text{for all } X \in \ft.
\]

Suppose $V$ is a vector space and $f \in V^*$.  Then we can endow $V$ with the structure of an $S(V)$-module by defining
\[
  p \cdot v = p(f)v,\quad p \in S(V),\ v \in V.
\]
Let $D_f$ be the unique $f$-derivation of $S(V)$ into $V$ satisfying $D_f(v) = v$ for all $v \in V$.  (Thus, the Leibniz rule is $D_f(xy) = D_f(x) f(y) + f(x) D_f(y)$.)  More explicitly, $D_f$ is the unique linear map $S(V) \to V$ such that $D_f(\F) = 0$ and
\[
  D_f \big( v_1^{n_1} \dotsm v_k^{n_k} \big)
  = \sum_{i=1}^k n_i f(v_1)^{n_1} \dotsm f(v_{i-1})^{n_{i-1}} f(v_i)^{n_i-1} f(v_{i+1})^{n_{i+1}} \dotsm f(v_k)^{n_k} v_i,
\]
for all $v_1,\dotsc,v_k \in V$ and $n_1,\dotsc,n_k \in \N_+$.  Note that for $p \in S(V)$ and $f, g \in V^*$, we have
\begin{equation} \label{eq:derivative-expression}
  \big( D_f(p) \big)(g) = \left. \frac{d}{dt} \right|_{t=0} p(f + tg).
\end{equation}

The following result is a generalization of \cite[Lem.~4.4]{RT92}.

\begin{lem} \label{lem:restriction}
  For all $i,j \in \{1,\dotsc,r\}$, $\lambda, \omega \in \Omega_1$, and $X \in \ft$, we have
  \begin{gather}
    R \left( p^{(j)}_\omega \right) = \varepsilon_{j,\omega},\quad \text{and} \label{eq:restriction-coordinates} \\
    \left( D_X \left( p^{(j)}_\omega \right) \right) (U_{i,\lambda}) = \delta_{i,j} \delta_{\lambda,\omega}. \label{eq:derivative-coordinates}
  \end{gather}
\end{lem}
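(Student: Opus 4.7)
The plan is to prove \eqref{eq:restriction-coordinates} directly from the defining property of the generators $p^{(j)}$ relative to the basis $u_i$, and then deduce \eqref{eq:derivative-coordinates} as a consequence via the linearization formula \eqref{eq:derivative-expression}.

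The key setup step is the dual analog of Lemma~\ref{lem:p-jmath-expansion}, which in the present ``primal'' setting (where $p^{(j)} \in S(\g^*)$ yields polynomial functions on $\g \otimes A$) identifies $p^{(j)}_\omega(X)$ as the coefficient of $q^\omega$ in the expansion of $p^{(j)}(\imath(X))$ in $\F[\Gamma]$. The argument parallels the one for Lemma~\ref{lem:p-jmath-expansion}, but uses $\imath$ in place of $\jmath$.

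For \eqref{eq:restriction-coordinates}, given $X \in \ft$ I would write
\[
  \imath(X) = x_+ + \sum_{i=1}^r u_i \cdot c_i, \quad \text{where} \quad c_i = \sum_{\omega \in \Omega_1} \varepsilon_{i,\omega}(X) \, q^\omega \in \F[\Gamma].
\]
Since the identity $p^{(j)}\bigl(x_+ + \sum_i c_i u_i\bigr) = c_j$ holds for arbitrary scalars $c_i$, it extends by polynomial universality to the commutative algebra $\F[\Gamma]$, giving $p^{(j)}(\imath(X)) = c_j = \sum_\omega \varepsilon_{j,\omega}(X) \, q^\omega$. Comparing coefficients of $q^\omega$ yields the claim. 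For \eqref{eq:derivative-coordinates}, formula \eqref{eq:derivative-expression} reduces the problem to computing the $t$-derivative at $t=0$ of $p^{(j)}_\omega(X + tU_{i,\lambda})$. Since $\ft$ is an affine subspace with direction $\g^{x_-} \otimes A$ and $U_{i,\lambda}$ lies in this direction, the entire affine line $X + tU_{i,\lambda}$ sits in $\ft$; hence \eqref{eq:restriction-coordinates} applies, and \eqref{eq:varepsilon-U*} expresses $\varepsilon_{j,\omega}$ on $\ft$ as an affine function whose slope on $U_{i,\lambda}$ is $U^*_{j,\omega}(U_{i,\lambda}) = \delta_{i,j}\delta_{\lambda,\omega}$.

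The main obstacle is the preliminary step of carefully relating the definition of $p^{(j)}_\omega$ to evaluation at $\imath(X)$ via an expansion analogous to Lemma~\ref{lem:p-jmath-expansion}; once that identification is in hand, both equations follow from routine extensions of the identity characterizing the $p^{(j)}$.
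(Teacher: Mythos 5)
Your proposal is correct and follows essentially the same route as the paper: both prove \eqref{eq:restriction-coordinates} by expanding $p^{(j)}(\imath(X))$ in $\F[\Gamma]$, extending Kostant's identity $p^{(j)}\bigl(x_+ + \sum_i c_i u_i\bigr) = c_j$ to $\F[\Gamma]$-valued coefficients and reading off $q^\omega$-coefficients via the $\imath$-analogue of Lemma~\ref{lem:p-jmath-expansion}, and then both deduce \eqref{eq:derivative-coordinates} from \eqref{eq:derivative-expression} together with the observation that the line $X + tU_{i,\lambda}$ stays in $\ft$, where $\varepsilon_{j,\omega}$ is affine with slope $U^*_{j,\omega}$. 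If anything, your version is slightly more explicit than the paper's, which displays the coefficient computation only at the points $x_+ \otimes 1 + u_i \otimes \tau^\lambda$ and leaves the extension to general $X \in \ft$ implicit.
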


\begin{proof}
  We have
  \[
    p^{(j)} \big( \imath (x_+ \otimes 1 + u_i \otimes \tau^\lambda) \big)
    = p^{(j)} \big( x_+ \otimes 1 + u_i \otimes q^\lambda \big)
    = \delta_{i,j} q^\lambda.
  \]
  \details{
    In the expression $p^{(j)} \big( x_+ \otimes 1 + u_i \otimes q^\lambda \big)$, we view $p^{(j)}$ as a polynomial function $\g \otimes \F[\Omega] \to \F[\Omega]$, as explained in Section~\ref{subsec:poly-funcs-adjoint}.  Viewing $\F[\Omega]$ as a polynomial algebra in $q_1,\dotsc,q_\ell$ as described before Corollary~\ref{cor:p-gamma-deriv-expression}, we can evaluate elements of $\F[\Omega]$ at $a=(a_1,\dotsc,a_\ell) \in \F^\ell$.  Let $\ev_a$ denote this evaluation.  Then, we have
    \[
      p^{(j)} \circ \ev_a = \ev_a \circ p^{(j)}.
    \]
    Therefore,
    \[
      \ev_a \circ p^{(j)} \big( x_+ \otimes 1 + u_i \otimes q^\lambda \big)
      = p^{(j)} \circ \ev_a \big( x_+ \otimes 1 + u_i \otimes q^\lambda \big)
      = p^{(j)} \big( x_+ + a^\lambda u_i \big)
      = \delta_{i,j} a^\lambda
      = \ev_a \big( \delta_{i,j} q^\lambda \big).
    \]
    It follows that $p^{(j)} \big( x_+ \otimes 1 + u_i \otimes q^\lambda \big) = \delta_{i,j} q^\lambda$ since two functions $p,p'$ on $\ft_0 \otimes \F[\Omega]$ are equal if and only if $\ev_a \circ p$ and $\ev_a \circ p'$ are equal as functions on $\ft_0$.
  }
  It then follows from Lemma~\ref{lem:p-jmath-expansion} that
  \[
    p_\omega^{(j)} \big( x_+ \otimes 1 + u_i \otimes \tau^\lambda \big)
    = \delta_{\omega,\lambda} \delta_{i,j},
  \]
  proving \eqref{eq:restriction-coordinates}.

  To prove \eqref{eq:derivative-coordinates}, we compute
  \begin{multline*}
    \left( D_X \left( p^{(j)}_\omega \right) \right) (U_{i,\lambda})
    \stackrel{\eqref{eq:derivative-expression}}{=} \left. \frac{d}{dt} \right|_{t=0} p^{(j)}_\omega (X + t U_{i,\lambda} )
    \stackrel{\eqref{eq:restriction-coordinates}}{=} \left. \frac{d}{dt} \right|_{t=0} \varepsilon_{j,\omega} (X + t U_{i,\lambda} ) \\
    \stackrel{\eqref{eq:varepsilon-U*}}{=} \left. \frac{d}{dt} \right|_{t=0} U_{j,\omega}^* (X  - x_+ \otimes 1 + t U_{i,\lambda} )
    = \delta_{i,j} \delta_{\lambda,\omega}. \qedhere
  \end{multline*}
\end{proof}

We can now state our main result.  The special case $\ell=1$ was established in \cite[Th.~4.5]{RT92}.

\begin{theo} \label{theo:inv-polys}
  Recall that $\g$ is a semisimple Lie alegbra, $\F$ is an algebraically closed field of characteristic zero, $\mu \in \N^\ell$, and $A = \F[t_1,\dotsc,t_\ell] / (t_1^{\mu_1+1},\dotsc,t_\ell^{\mu_\ell+1})$.
  \begin{enumerate}
    \item \label{theo-item:restriction-isom} The restriction $R$ induces an isomorphism of algebras $S \big( (\g \otimes A)^* \big)^{\g \otimes A} \to S(\ft^*) = \F[\ft]$.

    \item \label{theo-item:inv-poly-gen-set} The polynomials $p_\omega^{(j)} \in S^{k_j}\big( (\g \otimes A)^* \big)$, $\omega \in \Omega_1$, $j \in \{1,\dotsc,r\}$, form a system of algebraically independent generators of $S \big( (\g \otimes A)^* \big)^{\g \otimes A}$.

    \item The orbit under the adjoint action of every regular element of $\g \otimes A$ intersects $\ft$ at a unique point, and this intersection is transversal.  That is, we have
      \[
        \g \otimes A = \left( \g^{x_-} \otimes A \right) \oplus [\g \otimes A, X]
        \quad \text{for all } X \in \ft.
      \]
  \end{enumerate}
\end{theo}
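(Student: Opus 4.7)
The plan is to handle parts (a) and (b) simultaneously, and then use them together with a dimension count to deduce (c).

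For (a) and (b): by Proposition~\ref{prop:pgamma-invariant} combined with Lemma~\ref{lem:index-set-max-element}, each $p^{(j)}_\omega$ with $\omega \in \Omega_1$, $j \in \{1,\dotsc,r\}$, lies in $S^{k_j}\bigl((\g \otimes A)^*\bigr)^{\g \otimes A}$, and Proposition~\ref{prop:poly-alg-ind} applied to the algebraically independent generators $p^{(1)},\dotsc,p^{(r)}$ of $S(\g^*)^\g$ shows that the family $\{p^{(j)}_\omega\}$ is algebraically independent. By \eqref{eq:restriction-coordinates}, $R(p^{(j)}_\omega) = \varepsilon_{j,\omega}$, and the $\varepsilon_{j,\omega}$ are the affine coordinate functions on $\ft$, so they freely generate $\F[\ft]$. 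Combining this surjectivity onto $\F[\ft]$ with the injectivity in Proposition~\ref{prop:R-injection}\eqref{prop-item:R-injection}, $R$ restricts to an algebra isomorphism $S\bigl((\g \otimes A)^*\bigr)^{\g \otimes A} \xrightarrow{\sim} \F[\ft]$ sending the $p^{(j)}_\omega$ to a free polynomial generating set; this yields both (a) and (b) at once.

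For (c), existence of the intersection is Proposition~\ref{prop:R-injection}\eqref{prop-item:adjoint-orbits-meet-t}. Uniqueness then follows from (b): two points $X, X' \in \ft$ on a common adjoint orbit satisfy $p^{(j)}_\omega(X) = p^{(j)}_\omega(X')$ for all $j, \omega$, which by \eqref{eq:restriction-coordinates} forces $\varepsilon_{j,\omega}(X) = \varepsilon_{j,\omega}(X')$ for all $j, \omega$, hence $X = X'$. For transversality, I first note that Proposition~\ref{prop:chi-regular}\eqref{prop-item:chi} gives $\dim(\g^{x_-} \otimes A) = \chi(\g) \dim A = \chi(\g \otimes A)$, while Remark~\ref{rem:pairing-identification}\eqref{rem-item:regular} ensures every $X \in \ft$ is regular in $\g \otimes A$, so $\dim[\g \otimes A, X] = \dim(\g \otimes A) - \chi(\g \otimes A)$; thus the two summands have complementary dimensions. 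It therefore suffices to show their intersection is trivial. Unpacking the $\g \otimes A$-action on $S\bigl((\g \otimes A)^*\bigr)$ in terms of directional derivatives yields $(D_X p)\bigl([X, Y]\bigr) = 0$ for every invariant $p$ and every $Y \in \g \otimes A$; hence $D_X p$ annihilates $[\g \otimes A, X]$. For $Z \in (\g^{x_-} \otimes A) \cap [\g \otimes A, X]$, expanding $Z = \sum c_{i,\lambda} U_{i,\lambda}$ in the basis of $\g^{x_-} \otimes A$ and applying \eqref{eq:derivative-coordinates} gives $c_{j,\omega} = (D_X p^{(j)}_\omega)(Z) = 0$ for all $j, \omega$, so $Z = 0$.

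The principal technical point is the transversality: one must recognize that \eqref{eq:derivative-coordinates} is precisely the identity Jacobian statement for $\{p^{(j)}_\omega\}$ relative to the basis $\{U_{i,\lambda}\}$ of $\g^{x_-} \otimes A$, so that combining it with the routine vanishing of differentials of invariants on adjoint tangents forces the intersection to be zero. Everything else assembles cleanly from Lemma~\ref{lem:restriction} and the preceding propositions.
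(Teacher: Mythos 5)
Your proposal is correct and follows essentially the same route as the paper: invariance and algebraic independence of the $p^{(j)}_\omega$ from Proposition~\ref{prop:pgamma-invariant}, Lemma~\ref{lem:index-set-max-element}, and Proposition~\ref{prop:poly-alg-ind}; surjectivity of $R$ from \eqref{eq:restriction-coordinates} combined with injectivity from Proposition~\ref{prop:R-injection}; and for part (c), existence from Proposition~\ref{prop:R-injection}, uniqueness via the invariants as coordinates, and transversality by the dimension count plus the vanishing of $D_X$ of invariants on $[\g \otimes A, X]$ together with \eqref{eq:derivative-coordinates}. The only cosmetic difference is that you establish algebraic independence both via Proposition~\ref{prop:poly-alg-ind} and implicitly via the isomorphism $R$, which is harmless redundancy.
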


\begin{proof}
  \begin{asparaenum}
    \item Since $p_\omega^{(j)} \in S \big( (\g \otimes A)^* \big)^{\g \otimes A}$ for all $\omega \in \Omega_1$ and $j \in \{1,\dotsc,r\}$, it follows from Lemma~\ref{lem:restriction} that $R \left( S \big( (\g \otimes A)^* \big)^{\g \otimes A} \right) = \F[\ft]$.  Proposition~\ref{prop:R-injection}\eqref{prop-item:R-injection} then implies that $R$ induces an isomorphism of algebras $S \big( (\g \otimes A)^* \big)^{\g \otimes A} \to S(\ft^*) = \F[\ft]$.

    \item This follows immediately from part~\eqref{theo-item:restriction-isom} and Remark~\ref{rem:pairing-identification}\eqref{rem-item:polys}.

    \item Suppose $X$ is a regular element of $\g \otimes A$.  By Proposition~\ref{prop:R-injection}\eqref{prop-item:adjoint-orbits-meet-t}, there exists a point $X'$ in the intersection of $\ft$ and the orbit of $X$ under the adjoint action.  Then we have
      \[
        p_\omega^{(j)}(X) = p_\omega^{(j)}(X') \stackrel{\eqref{eq:restriction-coordinates}}{=} \varepsilon_{j,\omega}(X')
        \quad \text{for all } \omega \in \Omega_1,\ j \in \{1,\dotsc,r\},
      \]
      where the first equality holds since $p_\omega^{(j)}$ is $\g \otimes A$-invariant by Remark~\ref{rem:pairing-identification}\eqref{rem-item:polys}.  It follows that
      \[
        X' = x_+ \otimes 1 + \sum_{\omega \in \Omega_1} \sum_{j=1}^r p_\omega^{(j)}(X) u_j \otimes \tau^\omega,
      \]
      and so the intersection point $X'$ is unique.

      It remains to prove transversality.  Choose $X \in \ft$.  Then $X$ is a regular element of $\g \otimes A$, and so we have
      \[
        \dim [\g \otimes A, X] + \dim \big( \g^{x_-} \otimes A \big) = \dim \g \otimes A.
      \]
      \details{
        Consider the adjoint action of $X$ on $\g \otimes A$.  The image of this action is $[\g \otimes A, X]$, and the kernel is $(\g \otimes A)^X$.  Thus, by the Rank-Nullity Theorem, we have
        \[
          \dim [\g \otimes A, X] + \dim (\g \otimes A)^X = \dim \g \otimes A.
        \]
        By Remark~\ref{rem:pairing-identification}\eqref{rem-item:regular}, $x_- \otimes 1$ is a regular element of $\g \otimes A$.  Since $X$ is also a regular element of $\g \otimes A$, we have $\dim (\g \otimes A)^X = \dim (\g \otimes A)^{x_- \otimes 1} = \dim \g^{x_-} \otimes A$.  The claim follows.
      }
      Therefore, it suffices to prove that $[\g \otimes A,X] \cap \big( \g^{x_-} \otimes A \big) = \{0\}$.  Suppose $Y \in [\g \otimes A,X] \cap \big( \g^{x_-} \otimes A \big)$.  Then there exist $c_{j,\omega} \in \F$ such that $Y = \sum_{j,\omega} c_{j, \omega} U_{j,\omega}$.  On the other hand, every $P \in S \big( (\g \otimes A)^* \big)^{\g \otimes A}$ is constant on orbits under the adjoint action, and thus $(D_X P)(Y) = 0$, since $Y \in [\g \otimes A,X]$.
      \details{
        For all $t \in \F$, since $tY \in [\g \otimes A,X]$, there is an element $g_t$ of the adjoint group of $\g \otimes A$ such that $g_t \cdot X = X + tY + O(t^2)$.  Then, by \eqref{eq:derivative-expression}, we have
        \[
          (D_X P)(Y)
          = \left. \frac{d}{dt} \right|_{t=0} P(X + tY)
          = \left. \frac{d}{dt} \right|_{t=0} P(g_t \cdot X)
          = \left. \frac{d}{dt} \right|_{t=0} P(X)
          = 0.
        \]
      }
      Therefore, for all $\lambda \in \Omega_1$, $p \in S(\g^*)^\g$, and $i \in \{1,\dotsc,r\}$, we have
      \[
        0
        = \left( D_X \left( p_\lambda^{(i)} \right) \right) (Y)
        = \sum_{j,\omega} c_{j,\omega} \left( D_X \left( p_\lambda^{(i)} \right) \right) \left( U_{j,\omega} \right)
        = c_{i,\lambda},
      \]
      where the last equality follows from \eqref{eq:derivative-coordinates}.  Thus, $Y=0$, completing the proof. \qedhere
  \end{asparaenum}
\end{proof}

The Killing form $\kappa$ induces an isomorphism of vector spaces $K_\kappa \colon \g \to \g^*$ given by $K_\kappa (x) = \kappa (x, -)$.  This induces an algebra isomorphism $K_\kappa \colon S(\g) \to S(\g^*)$.  For each $j \in \{ 1, \dotsc, r \}$, let $\rho^{(j)} = K_\kappa^{-1} \left( p^{(j)} \right) \in S(\g)^\g$. Theorem~\ref{theo:inv-polys}\eqref{theo-item:inv-poly-gen-set} then implies the following.

\begin{cor} \label{cor:inv-poly-gen-set}
  With notation as in Theorem~\ref{theo:inv-polys}, the polynomials $\rho_\omega^{(j)} \in S^{k_j} (\g \otimes A)$, $\omega \in k_j \mu - \Omega_1$, $j \in \{ 1, \dotsc, r \}$, form a system of algebraically independent generators of $S (\g \otimes A)^{\g \otimes A}$.
\end{cor}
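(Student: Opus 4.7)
The plan is to transport Theorem~\ref{theo:inv-polys}\eqref{theo-item:inv-poly-gen-set} from the dual side $S\bigl((\g \otimes A)^*\bigr)^{\g \otimes A}$ to $S(\g \otimes A)^{\g \otimes A}$ via the Killing form.  Since $\g$ is semisimple, its Killing form $\kappa$ is symmetric, nondegenerate, and invariant; Lemma~\ref{lem:bilinear-form-properties} then guarantees that $\kappa_\mu$ shares these properties on $\g \otimes A$.  The induced vector space isomorphism $K_{\kappa_\mu} \colon \g \otimes A \to (\g \otimes A)^*$ therefore extends to a $(\g \otimes A)$-equivariant algebra isomorphism $K_{\kappa_\mu} \colon S(\g \otimes A) \xrightarrow{\sim} S\bigl((\g \otimes A)^*\bigr)$ (equivariance following at once from invariance of $\kappa_\mu$), which then restricts to an algebra isomorphism on $(\g \otimes A)$-invariants.

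The main step is to verify that, for each $j \in \{1,\dotsc,r\}$ and each $\omega \in \Omega_1$,
\[
  K_{\kappa_\mu}\bigl( \rho^{(j)}_{k_j \mu - \omega} \bigr) = p^{(j)}_\omega.
\]
I would first observe that for $u \in \g$ and $\lambda \in \Omega_1$, a direct computation from the definition of $\kappa_\mu$ (and using that $\mu$ is the greatest element of $\Omega_1$) gives $K_{\kappa_\mu}(u \otimes \tau^\lambda) = \kappa(u,-) \otimes \epsilon^{\mu - \lambda}$.  Extending multiplicatively, $K_{\kappa_\mu}$ sends $S^{k_j}(\g \otimes A)_{k_j\mu - \omega}$ to $S^{k_j}\bigl((\g \otimes A)^*\bigr)_{-\omega}$, so the degrees already match.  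Writing $\rho^{(j)} = K_\kappa^{-1}\bigl(p^{(j)}\bigr)$ in a chosen basis of $\g$ and unpacking the construction of both $\rho^{(j)}_{k_j\mu - \omega}$ (via the definition in Section~\ref{subsec:poly-defs} and Lemma~\ref{lem:p-jmath-expansion}) and $p^{(j)}_\omega$ (via the analogous construction applied to elements of $S(\g^*)$), one expresses each as a sum over multi-indices $(\lambda_1, \dotsc, \lambda_{k_j}) \in \Omega_1^{k_j}$ subject to a linear constraint; the change of variables $\lambda_i \mapsto \mu - \lambda_i$ (which is a bijection of $\Omega_1^{k_j}$) then converts one sum into the other.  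This indexing comparison is the main technical obstacle, though entirely elementary.

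With this identification in hand, the corollary follows directly.  By Theorem~\ref{theo:inv-polys}\eqref{theo-item:inv-poly-gen-set}, the collection $\bigl\{ p^{(j)}_\omega : j \in \{1, \dotsc, r\},\ \omega \in \Omega_1 \bigr\}$ is a system of algebraically independent generators of $S\bigl((\g \otimes A)^*\bigr)^{\g \otimes A}$.  Applying the inverse algebra isomorphism $K_{\kappa_\mu}^{-1}$ and using the bijection $\omega \leftrightarrow k_j\mu - \omega$ between $\Omega_1$ and $k_j \mu - \Omega_1$, we conclude that $\bigl\{ \rho^{(j)}_\omega : j \in \{1,\dotsc,r\},\ \omega \in k_j\mu - \Omega_1 \bigr\}$ is a system of algebraically independent generators of $S(\g \otimes A)^{\g \otimes A}$, completing the proof.
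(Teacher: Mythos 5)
Your proposal is correct and is essentially the paper's argument: the paper defines $\rho^{(j)} = K_\kappa^{-1}(p^{(j)})$ and deduces the corollary directly from Theorem~\ref{theo:inv-polys}\eqref{theo-item:inv-poly-gen-set} via the Killing-form identification of Remark~\ref{rem:pairing-identification}, which is exactly your transport by $K_{\kappa_\mu}$ with the index shift $\omega \leftrightarrow k_j\mu - \omega$. You simply make explicit the compatibility $K_{\kappa_\mu}(u \otimes \tau^\lambda) = \kappa(u,-)\otimes\epsilon^{\mu-\lambda}$ and the resulting identity $K_{\kappa_\mu}\bigl(\rho^{(j)}_{k_j\mu-\omega}\bigr) = p^{(j)}_\omega$, which the paper leaves implicit.
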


\begin{eg}
  Consider the case $\g = \fsl_2$, and let $\{x_+,x_-,h\}$ be the standard Chevalley basis.  The Casimir element $c = \frac{1}{64} h^2 + \frac{1}{32} x_- x_+ + \frac{1}{32} x_+ x_-$, which generates the center of $U(\g)$ as an algebra, corresponds to the polynomial function $\rho = \frac{1}{64} \left( h^2 + 4 x_- x_+ \right) \in S(\g)^\g$.  Now, given $\ell > 0$ and $\mu \in \N^\ell$, let $A = \F [t_1, \dotsc, t_\ell] / \F \langle t^\omega \mid \omega \not\le \mu \rangle$.  By Corollary~\ref{cor:inv-poly-gen-set}, the polynomials
  \[
    \rho_{2\mu - \omega}
    = \sum_{0 \le \gamma \le \omega} \big( (h \otimes \tau^{\mu - \gamma}) (h \otimes \tau^{\mu - \omega + \gamma}) + 4 (x_- \otimes \tau^{\mu - \gamma}) (x_+ \otimes \tau^{\mu - \omega + \gamma}) \big),\quad \omega \in \N^\ell,\ \omega \le \mu,
  \]
  form a system of algebraically independent generators of $S (\g \otimes A)^{\g \otimes A}$.

  Now, the dual basis $\{ x_+^*, x_-^*, h^*\}$  of $\g^*$ is identified with $\left\{ \frac 1 4 x_-, \frac 1 4 x_+, \frac 1 8 h \right\}$ via the Killing form, and the Casimir element $c$ corresponds to the polynomial function $p = (h^*)^2 + x_-^* x_+^* \in S(\g^*)^\g$ (see \cite[Example~23.3]{Hum78}).  By Theorem~\ref{theo:inv-polys}\eqref{theo-item:inv-poly-gen-set}, the polynomials
  \[
    p_{\omega}
    = \sum_{0 \le \gamma \le \omega} \big( (h^* \otimes \epsilon^\gamma) (h^* \otimes \epsilon^{\omega - \gamma}) + (x_-^* \otimes \epsilon^\gamma) (x_+^* \otimes \epsilon^{\omega - \gamma}) \big),\quad \omega \in \N^\ell,\ \omega \le \mu,
  \]
  form a system of algebraically independent generators of $S\big( (\g \otimes A)^* \big)^{\g \otimes A}$.
\end{eg}

As an application of Theorem~\ref{theo:inv-polys}, we have the following result.

\begin{theo} \label{theo:center-acts-trivially}
  If $\mu > 0$ (i.e.\ $A \ne \F$), then the center of $U(\g \otimes A)$ acts by the restriction of the augmentation map $U(\g \otimes A) \to \F$ on any irreducible finite-dimensional module.  In particular, all irreducible finite-dimensional modules have the same central character.
\end{theo}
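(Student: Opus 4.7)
The plan is to show first that the nilpotent ideal $\mathfrak{n} := \g \otimes A_{>0}$ annihilates every irreducible finite-dimensional $\g \otimes A$-module $V$, and then to combine this with Corollary~\ref{cor:inv-poly-gen-set} via the PBW symmetrization map. Since $A$ is finite dimensional and $A_{>0}$ is a nilpotent ideal of $A$, $\mathfrak{n}$ is a nilpotent ideal of $\g \otimes A$, and the quotient $(\g \otimes A)/\mathfrak{n} \cong \g$ is semisimple, so $\mathfrak{n}$ is in fact the radical of $\g \otimes A$. Since $\g$ is semisimple, $[\g,\g] = \g$, and therefore
\[
  [\g \otimes A, \mathfrak{n}] \supseteq [\g, \g] \otimes A_{>0} = \mathfrak{n}.
\]
By the classical consequence of Lie's theorem (valid because $\F$ is algebraically closed of characteristic zero), the radical of any finite-dimensional Lie algebra acts on an irreducible finite-dimensional module by a scalar that vanishes on the commutator with the ambient algebra. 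Combined with the previous display, this forces $\mathfrak{n} \cdot V = 0$.

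For the second step, the PBW symmetrization map $\mathrm{sym} \colon S(\g \otimes A) \to U(\g \otimes A)$ is $\g \otimes A$-equivariant for the adjoint actions and hence restricts to a linear bijection $S(\g \otimes A)^{\g \otimes A} \xrightarrow{\sim} Z(U(\g \otimes A))$. By Corollary~\ref{cor:inv-poly-gen-set}, $S(\g \otimes A)^{\g \otimes A}$ is a polynomial algebra on the generators $\rho^{(j)}_\omega$, where $\omega \in k_j \mu - \Omega_1$ and $1 \le j \le r$. Since $\g$ is semisimple there are no invariants of symmetric degree one, so each $k_j \ge 2$; combined with $\mu > 0$, this gives $\omega \ge (k_j-1)\mu > 0$ for every generator. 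Thus each generator has strictly positive $\Gamma$-degree and symmetric degree at least $2$.

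Now, given $z \in Z(U(\g \otimes A))$, write $z = \mathrm{sym}(p)$ for the unique $p \in S(\g \otimes A)^{\g \otimes A}$, and decompose $p = c + q$, where $c \in \F$ is the constant term of $p$ viewed as a polynomial in the $\rho^{(j)}_\omega$ and $q$ is a linear combination of nonconstant monomials. Every monomial in $q$ has strictly positive $\Gamma$-degree, so when expanded as an element of $S(\g \otimes A)$ it is a sum of products of elements of $\g \otimes A$ in which at least one factor lies in $\mathfrak{n}$. Consequently $\mathrm{sym}(q)$ lies in the two-sided ideal of $U(\g \otimes A)$ generated by $\mathfrak{n}$, which annihilates $V$ by the first step. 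Hence $z$ acts on $V$ as the scalar $c$; since $q$ has symmetric degree at least $2$, $\mathrm{sym}(q)$ lies in the augmentation ideal and $c = \varepsilon(z)$, proving the theorem. The main obstacle is the preliminary reduction $\mathfrak{n}\cdot V = 0$, which relies on classical Lie theory rather than on the constructions of this paper; once that is in hand, the remainder is a routine combination of Corollary~\ref{cor:inv-poly-gen-set}, PBW, and the positivity of $\Gamma$-degrees of the invariant generators.
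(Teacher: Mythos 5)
Your proof is correct, and its second half (the symmetrization bijection $S(\g\otimes A)^{\g\otimes A}\to Z(\g\otimes A)$ combined with the positivity of the $\Gamma$-degrees of the generators supplied by Corollary~\ref{cor:inv-poly-gen-set}) is essentially the paper's argument. Where you genuinely diverge is the first step. The paper shows that $\g\otimes\sm$ (your $\mathfrak{n}$) annihilates every irreducible finite-dimensional module by citing the classification of such modules as evaluation representations (\cite[Prop.~10]{CFK10} or \cite[Cor.~5.8]{NSS12}), whereas you derive the annihilation directly from classical Lie theory: $\mathfrak{n}$ is a nilpotent ideal with semisimple quotient, hence the radical; by the standard corollary of Lie's theorem it acts on an irreducible finite-dimensional module by a character $\lambda$ vanishing on $[\g\otimes A,\mathfrak{n}]$; and $[\g\otimes A,\mathfrak{n}]=\mathfrak{n}$ since $[\g,\g]=\g$, forcing $\lambda=0$. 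This is a valid, self-contained substitute: it buys independence from the equivariant-map-algebra literature, at the cost of yielding only the annihilation statement rather than the stronger fact that the module is an evaluation module (which is all that is needed here anyway). Two further remarks. First, your observation that $k_j\ge 2$ (semisimple $\g$ has no linear invariants), so that every $\omega\in k_j\mu-\Omega_1$ satisfies $\omega\ge(k_j-1)\mu\ge\mu>0$, is exactly the point that makes the degree argument run, and it is used implicitly in the paper as well. Second, your placement of the symmetrization of $q$ in the \emph{two-sided ideal} of $U(\g\otimes A)$ generated by $\mathfrak{n}$ is in fact more precise than the paper's assertion that $S(\g\otimes A)^{\g\otimes A}\subseteq S(\g\otimes\sm)$: a generator such as $\rho^{(j)}_{(k_j-1)\mu}$ contains monomials with a factor from $\g\otimes 1$, so it lies in the ideal of $S(\g\otimes A)$ generated by $\g\otimes\sm$ but not in the subalgebra $S(\g\otimes\sm)$; your formulation is the one that goes through verbatim.
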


\begin{proof}
  Let $\varphi$ be an irreducible finite-dimensional representation of $\g \otimes A$.  By \cite[Prop.~10]{CFK10} or \cite[Cor.~5.8]{NSS12}, $\varphi$ is an evaluation representation.  In other words, $\varphi$ factors through the Lie algebra homomorphism $\g \otimes A \twoheadrightarrow \g \otimes A/\sm$, where $\sm = (\tau_1,\dotsc,\tau_\ell)$ is the unique maximal ideal of $A$.  In particular, $\g \otimes \sm$ acts by zero on the corresponding module.

  Consider the $\F$-linear symmetrization map
  \[
    \Psi \colon S(\g \otimes A) \to U(\g \otimes A),\quad
    X_1 \dotsm X_n \mapsto \frac{1}{n!} \sum_{\sigma \in \mathfrak{S}_n} X_{\sigma(1)} \dotsm X_{\sigma(n)},
  \]
  where $\mathfrak{S}_n$ denotes the symmetric group on the set $\{1,\dotsc,n\}$ and $U(\g \otimes A)$ denotes the universal enveloping algebra of $\g \otimes A$.  The map $\Psi$ restricts to an isomorphism of vector spaces from $S(\g \otimes A)^{\g \otimes A}$ to the center $Z(\g \otimes A)$ of $U(\g \otimes A)$.
  \details{
    See, for example: D. Calaque and C. A. Rossi. \emph{Lectures on Duflo isomorphisms in Lie algebra and complex geometry}. EMS Series of Lectures in Mathematics. European Mathematical Society (EMS), Z\"urich, 2011.
  }

  Since $\mu > 0$, it follows from Corollary~\ref{cor:inv-poly-gen-set} that $S(\g \otimes A)^{\g \otimes A} \subseteq S(\g \otimes \sm)$.  Thus
  \[
    Z(\g \otimes A) = \Psi \left( S(\g \otimes A)^{\g \otimes A} \right) \subseteq \Psi\big( S(\g \otimes \sm) \big) \subseteq U(\g \otimes \sm).
  \]
  The result follows.
\end{proof}

\begin{rem}
  In spite of Theorem~\ref{theo:center-acts-trivially}, the center of $U(\g \otimes A)$ does not necessarily act by the restriction of the augmentation map on all finite-dimensional modules, since the category of such modules is not semisimple in general.  For example, in the adjoint representation $\fsl_2 \otimes \F[t]/(t^2)$, the center does not act by the restriction of the augmentation map.
\end{rem}


\bibliographystyle{alpha}
\bibliography{Macedo-Savage}

\end{document}